\newcommand{\zjj}[1]{{\color{purple}#1}}
\newcommand{\eb}[1]{{\color{teal}#1}}
\DeclareMathOperator{\tr}{tr}
\theoremstyle{plain}
\newtheorem{theorem}{Theorem}[section]
\newtheorem{lemma}[theorem]{Lemma}
\newtheorem{proposition}[theorem]{Proposition}
\newtheorem{assumption}{Assumption}
\theoremstyle{definition}
\theoremstyle{remark}
\newtheorem{remark}{Remark}
\begin{document}

\articletype{}

\title{Soft quasi-Newton:\\ Guaranteed positive definiteness by relaxing the secant constraint}

\author{
\name{Erik Berglund, Jiaojiao Zhang and Mikael Johansson \thanks{CONTACT Erik Berglund. Email: erbergl@kth.se}}
\affil{School of Electrical Engineering and Computer Science, KTH Royal Institute of Technology, Sweden}
}

\maketitle

\begin{abstract}
%
%

We propose a novel algorithm, termed soft quasi-Newton (soft QN), for optimization in the presence of bounded noise. Traditional quasi-Newton algorithms are vulnerable to such perturbations. To develop a more robust quasi-Newton method, we replace the secant condition in the matrix optimization problem for the Hessian update with a penalty term in its objective and derive a closed-form update formula. A key feature of our approach is its ability to maintain positive definiteness of the Hessian inverse approximation. Furthermore, we establish the following properties of soft QN: it recovers the BFGS method under specific limits, it treats positive and negative curvature equally, and it is scale invariant. Collectively, these features enhance the efficacy of soft QN in noisy environments. For strongly convex objective functions and Hessian approximations obtained using soft QN, we develop an algorithm that exhibits linear convergence toward a neighborhood of the optimal solution, even if gradient and function evaluations are subject to bounded perturbations. Through numerical experiments, we demonstrate superior performance of soft QN compared to state-of-the-art methods in various scenarios.
\end{abstract}

\begin{keywords}
quasi-Newton methods; general bounded noise; secant condition; penalty
\end{keywords}

\section{Introduction}\label{sec:Introduction}

Although the importance of uncertainty and stochasticity has been recognized since the early days of numerical optimization (see, \emph{e.g.},~\cite{RM_SGD,Dan:55}), it is fair to say that the majority of solution algorithms have been developed under the assumption that the objective and constraint functions (and often also their gradients and Jacobians) can be evaluated accurately. Even stochastic programs~\cite{BiL:97}, that explicitly model the uncertainty in the underlying decision problem, are often approached via their deterministic equivalents. Techniques for analyzing the effect of noise on iterative algorithms have a rich history (see, \emph{e.g.}~\cite{Polyak}) but 
it is only over the last few decades that the research devoted to algorithms that operate reliably using noisy function and gradient evaluations has virtually exploded. This increased interest has been fueled by an increasing number of applications in machine learning and simulation optimization. 

Machine learning, a subset of artificial intelligence, automates pattern discovery within data sets and leverages these patterns to make inferences based on input data \cite{jordan2015machine}. Optimization plays a foundational role in various machine learning approaches as they aim to minimize the discrepancy between model predictions and observed data, necessitating the solution of optimization problems \cite{sra2012optimization}. A particularly challenging setting in machine learning arises when dealing with large datasets that cannot be processed simultaneously. In such cases, practitioners employ data sampling techniques to construct approximations of the objective function to be minimized, rather than evaluating it exhaustively \cite{zhang2004solving, bottou2010large}. This situation leads to the formulation of stochastic optimization problems.
In addition to settings where data is sampled, stochastic optimization problems can arise from numerical simulations that contain stochastic components. Simulation optimization, i.e. optimization of functions that can only be evaluated through a stochastic simulation, is an important area with applications in diverse domains such as operations research, manufacturing, medicine, and engineering~\cite{Amaran2016}. In these simulations, the statistical properties of the noise are not as clear, and it is natural to consider the errors as bounded noise
with otherwise unknown properties. 

First-order optimization methods are popular for solving stochastic problems. In particular, the stochastic gradient descent method~\cite{RM_SGD} and variations thereof have become the workhorse of modern large-scale machine learning \cite{OptimizationML}. The scalability and ease of implementation of these methods contribute to their popularity. In addition, they can achieve the optimal oracle complexity up to a multiplicative constant in many stochastic settings \cite{6142067}. However, first-order methods struggle on ill-conditioned and highly nonlinear problems. For example, the iteration complexity of stochastic gradient descent on  Lipschitz smooth, strongly convex problems grows proportionally to the condition number~\cite{OptimizationML}. In the non-convex setting, empirical evidence suggests that second-order methods have advantages when it comes to the robustness of hyper-parameter tuning and the ability to escape saddle points \cite{Second_order_non_convex_empirical} \cite{10.5555/2969033.2969154} \cite{Paternain2019NewtonNonconvex}. 

While the use of Hessian information can significantly improve the iteration complexity of optimization algorithms, it can often be expensive to compute. To gain some of the benefits of Newton's method in settings where only gradient information is available, quasi-Newton (QN) methods build a model of the Hessian using only first-order information. In the deterministic setting, quasi-Newton methods have proven to be much more effective than gradient descent for small- and medium-scale problems, both in theory and practice \cite{NoceWrig06}. For large-scale problems, limited memory quasi-Newton methods, and L-BFGS \cite{Liu1989} in particular, have been prominent. However, traditional quasi-Newton methods are sensitive to the presence of random noise in the gradients. Thus, some adaptions must be made to retain their advantages in the stochastic setting. Currently, this is a very active topic of research.

A significant effort has been devoted to adapting quasi-Newton methods to problems where the stochastic noise comes from the sampling of gradients. These techniques combine stochastic gradients with carefully constructed Hessian estimations. For example, the paper~\cite{bordes2009sgd} constructs diagonal or low-rank Hessian approximations originating from the secant condition. The study~\cite{byrd2016stochastic} decouples the stochastic gradient and curvature estimate calculation by replacing the gradient difference that is typically used in quasi-Newton methods with a Hessian-vector product. A different approach presented in~\cite{mokhtari2014res} introduces a regularized BFGS method that incorporates regularization techniques to ensure positive definiteness and boundedness of Hessian estimations for strongly convex loss functions. For non-convex problems, the paper   \cite{wang2017stochastic} employs damping techniques to preserve the positive definiteness and boundedness of Hessian estimations. Another set of studies \cite{mokhtari2018iqn,lahoti2023sharpened} investigate incremental quasi-Newton approaches, wherein a solitary Hessian approximation associated with a chosen objective loss from the finite sum is updated, while all other Hessian approximations remain unchanged. 

Another extension of stochastic gradient descent (SGD) for constructing descent directions is based on combining variance reduction techniques with quasi-Newton algorithms to mitigate the noise impact introduced by stochastic sampling \cite{moritz2016linearly,yasuda2022addhessian,chen2023modified}.

In the setting of stochastic optimization with general bounded noise, quasi-Newton methods have received considerably less attention. A study by Xie et al. analyzed the behavior of BFGS in the presence of errors in the function and gradient evaluations \cite{Xie2020analysis}, and showed that with a lengthening technique to modify the line-search procedure, BFGS can be made to converge linearly to a neighborhood of the minimum of a strongly convex function. This work was expanded upon in \cite{shi2022noise}, where a modification of the line-search method was proposed that did not require knowledge of the strong convexity constant. Similar ideas were explored earlier in the derivative-free setting, where the length of finite-difference interval was used to control the influence of noise in quasi-Newton approximations \cite{Berahas2019DerivativeFree}.
Apart from adjusting the finite-difference approximation to deal with bounded noise, another idea is to relax the secant condition because, in the presence of noise, enforcing the secant condition for quasi-Newton updates can be problematic. A recent work on stochastic quasi-Newton methods takes this approach \cite{Irwin2023}. By replacing the secant condition in the problem used to derive BFGS with a penalty term, the authors derive a method which they call secant-penalized BFGS, or SP-BFGS for short. They show that the resulting update formula can guarantee positive definiteness, provided that the penalty parameter is small enough. 

In this paper, we introduce a novel quasi-Newton algorithm for minimizing a loss function in the presence of general bounded noise. Similarly to \cite{Irwin2023}, we relax the secant condition by replacing it with a penalty term, but we consider a different matrix optimization problem and derive a new Hessian approximation with several attractive features. We refer to the proposed approach as soft QN, as it ``softens" the constraint of having to satisfy the secant condition. Unlike SP-BFGS, which requires its penalty parameter to be sufficiently small to ensure positive definiteness of the Hessian approximation, we prove that soft QN generates a positive definite matrix for all positive penalty parameters, even when the loss functions are non-convex. In addition, we establish several other attractive properties of soft QN, including that it recovers BFGS under specific limits, handles positive and negative curvature equally, and exhibits scale invariance. These characteristics enhance the efficacy of the proposed soft QN in noisy settings. For strongly convex and Lipschitz smooth loss functions, the resulting quasi-Newton algorithm generates iterates that are guaranteed to decrease the objective function at a linear rate to a neighborhood of the optimal solution.  Numerical experiments show that soft QN is competitive with SP-BFGS and superior to other benchmark algorithms.

\section{Preliminaries: quasi-Newton methods}\label{sec:QN_methods}

In this section, we provide a brief review of the key ideas that underpin traditional quasi-Newton methods~\cite{NoceWrig06}. For clarity, we focus on unconstrained minimization problems
\begin{align}\label{eq:objective}
	\begin{array}[c]{ll}
		\underset{x\in \mathbb{R}^n}{\mbox{minimize}} & f(x),
	\end{array}
\end{align}
where the objective function $f: \mathbb{R}^n\mapsto \mathbb{R}$ is continuous and twice differentiable. 

In essence, quasi-Newton methods are iterative optimization algorithms that use approximations of the Hessian of the objective function to speed up their convergence. These approximations are constructed based on the previous iterates $(x_k)$ generated by the method and the gradient of the objective at these points. More specifically, given an initial point $x_0$, these methods generate a sequence of iterates
\begin{align}
	x_{k+1} &= x_k + \eta_k p_k \label{eqn: variable_update},
\end{align}
where $\eta_k>0$ is a step-size and $p_k \in \mathbb{R}^n$ is a search direction chosen to minimize a quadratic model $q_k$ of $f$ around $x_k$. The model uses $f(x_k)$, $\nabla f(x_k)$ and an approximation $B_k$ of $\nabla^2 f(x_k)$: 
\begin{align*}
	q_k(p) &= f(x_k) + \nabla f(x_k)^{\top}p + \frac{1}{2}p^{\top}B_k p.
\end{align*}
When $B_k$ is invertible, $q_k$ has the unique critical point
\begin{equation}\label{eq:pk}
	p_k = - B_k^{-1}\nabla f(x_k).
\end{equation}
If $B_k$ is positive definite the model is minimized at this critical point and $p_k$ is a descent direction for $f(x_k)$, \emph{i.e.} for a sufficiently small step-size $\eta_k$, we have $f(x_k + \eta_k p_k) < f(x_k)$. Thus, positive definiteness is a desirable property for $B_k.$ 

Different Hessian approximations $B_k$ lead to different quasi-Newton methods. Most methods use Hessian approximations that satisfy the secant condition
\begin{equation}\label{eqn:secant_condition}
	y_k= B_{k+1} s_k,
\end{equation}
where $s_k:= x_{k+1} - x_k$ and $y_k := \nabla f(x_{k+1}) - \nabla f(x_k)$.
The condition is often motivated by the fact that it is satisfied by the actual Hessian of the loss function. Specifically, since
\begin{equation*}
	y_k = \int_0^1 \nabla^2 f(x_k + t s_k)s_k dt,
\end{equation*}
the mean value theorem of integrals says that there exists a $\theta \in [0,1]$ such that 
\begin{equation*}
	y_k = \nabla^2 f(x_k + \theta s_k) s_k.
\end{equation*}
Requiring Equation~\eqref{eqn:secant_condition} to be satisfied adds information about the true objective function's curvature to the model, but still leaves several degrees of freedom for the choice of $B_{k+1}$. A key principle of quasi-Newton methods is that they update $B_{k+1}$ from the previous Hessian approximation $B_k$ by minimizing the size of the necessary modification~\cite{NoceWrig06}. This allows the algorithm to accumulate approximate curvature information from previous iterations and aids convergence when close to a local minimum, where the loss function can be well-approximated by a quadratic one. Different metrics for measuring closeness between $B_{k+1}$ and $B_k$ lead to different quasi-Newton methods.

The most well-known quasi-Newton method, BFGS, has strong practical performance and its limited-memory variant, L-BFGS, is used in various large-scale optimization solvers \cite{NoceWrig06}. To derive the BFGS update, one considers the inverse of the Hessian approximation, $H_k$, and chooses $H_{k+1}$ to satisfy the inverse secant condition
\begin{equation}\label{eqn:inverse_secant_condition}
	H_{k+1}y_k = s_k.
\end{equation}
To ensure that $H_{k+1}$ is close to $H_k$, one considers the matrix optimization problem
\begin{equation}\label{eqn:BFGS_problem}
	\begin{aligned}
		& \underset{H}{\text{minimize}}
		& & \|W_k^{(1/2)}(H - H_k)W_k^{(1/2)}\|_F^{2} \\
		& \text{subject to} & & H y_k = s_k,\; H = H^T.
	\end{aligned}
\end{equation}
Here, $\| \cdot\|_F$ denotes the Frobenius norm, $W_k$ is any positive definite matrix that satisfies $W_k s_k = y_k$, and $W_k^{(1/2)}$ is its principal square root. Setting $H_{k+1}=H^{\star}$, where $H^{\star}$ is the solution to \eqref{eqn:BFGS_problem}, leads to the update formula
\begin{align}\label{eqn:BFGS_H_update}
	H_{k+1} &= \left(I - \dfrac{s_k y_k^T}{y_k^T s_k} \right) H_k \left(I - \dfrac{y_k s_k^T}{y_k^Ts_k} \right) + \frac{s_k s_k^T}{y_k^T s_k}.
	\intertext{For a derivation of this, see e.g. \cite{variationalQN}. By the Sherman-Morrison-Woodbury formula, the corresponding update for $B_k$ is}
	B_{k+1} &= B_k - \dfrac{B_k s_k s_k^T B_k}{s_k^T B_k s_k} + \dfrac{y_k y_k^T}{y_k^T s_k}.\label{eqn:BFGS_B_update}
\end{align}
An important property of the BFGS update is that if $H_k$ is positive definite, then $H_{k+1}$ will be positive definite if and only if $s_k^{\top}y_k>0$ \cite[p. 140]{NoceWrig06}. In fact, a positive definite weight matrix $W_k$ satisfying $W_ks_k=y_k$ exists if and only if $s_k^{\top}y_k>0$, so the matrix optimization problem (\ref{eqn:BFGS_problem}) is only defined when this curvature condition holds.

The BFGS update also solves the matrix optimization problem
\begin{equation}\label{eqn:BFGS_problem_2}
	\begin{aligned}
		& \underset{B \succ 0}{\text{minimize}}
		& & \tr (B B_k^{-1}) - \log(\det(B B_k^{-1})) \\
		& \text{subject to} & & B s_k = y_k.
	\end{aligned}
\end{equation}
In this problem, the weighted Frobenius norm has been replaced by the log-det divergence to measure the distance between $B$ and $B_k$.
Since the constraints on $B$ require that $B \succ 0$ and that $B s_k = y_k$, it only admits a solution if $s_k^T y_k > 0$. 

In the deterministic setting, BFGS is typically combined with a line-search method to determine the step-size $\eta_k$ in Equation~\eqref{eqn: variable_update}. If the line-search technique enforces the Wolfe conditions \cite[Equations (3.6a) and (3.6b)]{NoceWrig06}, the algorithm ensures not only descent in each iteration but also that $s_k^T y_k > 0$ holds. In the stochastic setting, where $y_k$ and $s_k$ are perturbed by noise, it may be impossible to satisfy this condition. This contributes to making regular quasi-Newton methods very sensitive to noise, to the extent that even a slight perturbation in the gradients might cause them to break down.
%

As outlined in Section~\ref{sec:Introduction}, multiple attempts have been made at adapting quasi-Newton methods for stochastic problems. In the next section, we introduce a method that modifies the matrix optimization problem that defines the BFGS update to be more robust to noise, yet results in a simple update formula for the  Hessian inverse approximation.

\section{The soft QN method}
\label{sec:softQN}
In this section, we introduce soft QN, a quasi-Newton method derived by solving a matrix optimization problem with a ``softened'' secant condition.
The resulting algorithm is similar to the classical BFGS method but it is more robust to noise and guaranteed to always yield a positive definite  Hessian inverse approximation.
Below, we pose the matrix optimization problem, derive the corresponding update formula, and discuss key properties of the method.

\subsection{A relaxed matrix optimization problem and its solution}

As discussed above, if $s_k^T y_k \leq 0$ there is no matrix that is both positive definite and satisfies the secant equation~\eqref{eqn:secant_condition}. Therefore, the matrix optimization problems \eqref{eqn:BFGS_problem} and \eqref{eqn:BFGS_problem_2} that are traditionally used for deriving quasi-Newton updates are only justified when $s_k^Ty_k > 0$. However, suppose that one replaces the secant constraint in those problems with a penalty term that punishes deviations from the secant condition. It may then be possible to obtain a positive definite $H_{k+1}$ that is close to satisfying the secant equation. 
%
To this end, we consider the following 
relaxation of the matrix optimization problem (\ref{eqn:BFGS_problem_2}) 
\begin{equation}\label{eqn:soft_QN_problem}
	\begin{aligned}
		& \underset{B}{\text{minimize}}
		& \Upsilon(B):= & \ \tr (BH_k) - \log(\det(B H_k)) + \alpha_k 
		\upsilon(s_k-B^{-1}y_k)
		\\
		& \text{subject to} & &  B \succ 0.
	\end{aligned}
\end{equation}
The penalty function $\upsilon: \mathbb{R}^n\mapsto\mathbb{R}_{\ge 0}$ replaces the hard secant condition and  
is assumed to be convex with $\upsilon(0)=0$. The parameter  $\alpha_k\geq 0$ 
is used to determine how large a penalty is imposed for violating the secant equation. There are various ways that one could choose the form of the penalty term. As we will show later in this section, letting
\begin{equation}\label{eqn:penalty_term}
	\upsilon(s_k-B^{-1}y_k) = \Vert s_k-B^{-1}y_k\Vert_B^2,
\end{equation}
where $\Vert z\Vert_B^2=z^T Bz$, leads to an explicit update formula that is invariant under linear changes of variables. Furthermore, since this particular penalty function can be written as both $(s_k - B^{-1}y_k)^TB(s_k - B^{-1}y_k)$ and $(B s_k - y_k)^TB^{-1}(B s_k - y_k)$, it can be seen as a penalty on both the residual of the secant condition \eqref{eqn:secant_condition} and of the inverse secant condition \eqref{eqn:inverse_secant_condition}. 
The main result of this subsection is given by the following theorem.
\begin{theorem}
	For  every $\alpha_k>0$ and every $H_k \succ 0$, there exists a unique positive definite solution $B^{\star}$ to~\eqref{eqn:soft_QN_problem} with the function $\upsilon$ defined in~\eqref{eqn:penalty_term}. Letting $H_{k+1}=(B^{\star})^{-1}$ leads to the recursive update
	%
	\begin{equation}\label{eqn:update}
		H_{k+1} = H_k + \alpha_k s_k s_k^T - \frac{\alpha_k}{\gamma_k^2}(H_ky_k + \alpha_k (s_k^Ty_k)s_k)(H_ky_k + \alpha_k (s_k^Ty_k)s_k)^T,
	\end{equation}
	where
	\begin{equation*}
		\gamma_k = 0.5 + \sqrt{0.25 + \alpha_k y_k^TH_ky_k + \alpha_k^2 (s_k^Ty_k)^2}.
	\end{equation*}
\end{theorem}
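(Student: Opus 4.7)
The plan is to expand $\Upsilon$ explicitly, show the problem admits a unique positive definite minimizer by strict convexity and coerciveness, and then extract the update from the first-order optimality condition. Expanding the quadratic penalty,
\begin{equation*}
\upsilon(s_k - B^{-1} y_k) = s_k^T B s_k - 2 s_k^T y_k + y_k^T B^{-1} y_k,
\end{equation*}
and setting $M_k := H_k + \alpha_k s_k s_k^T \succ 0$, I would rewrite the objective as
\begin{equation*}
\Upsilon(B) = \tr(B M_k) + \alpha_k y_k^T B^{-1} y_k - \log\det(B H_k) - 2\alpha_k s_k^T y_k.
\end{equation*}
Strict convexity on the positive definite cone follows because $-\log\det$ is strictly convex, $\tr(B M_k)$ is linear in $B$, and $B \mapsto \tr(B^{-1} y_k y_k^T)$ is convex. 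Coerciveness I would check eigenvalue by eigenvalue: since $M_k \succ 0$, each summand $\lambda_{\min}(M_k)\lambda_i - \log \lambda_i$ blows up as any $\lambda_i$ tends to $0^+$ or to $\infty$, while all remaining terms in $\Upsilon$ are nonnegative. Strict convexity and coerciveness on the open cone then give a unique minimizer $B^\star \succ 0$.

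Next, I would set the matrix gradient of $\Upsilon$ at $B^\star$ to zero. Using the standard identities $\nabla_B \log\det B = B^{-1}$ and $\nabla_B \tr(B^{-1}C) = -B^{-1} C B^{-1}$, optimality reads
\begin{equation*}
M_k = (B^\star)^{-1} + \alpha_k (B^\star)^{-1} y_k y_k^T (B^\star)^{-1}.
\end{equation*}
Defining $H_{k+1} := (B^\star)^{-1}$ turns this into the nonlinear matrix equation
\begin{equation*}
H_{k+1} + \alpha_k H_{k+1} y_k y_k^T H_{k+1} = H_k + \alpha_k s_k s_k^T.
\end{equation*}
The key is to exploit the rank-one structure of the nonlinearity. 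Post-multiplying by $y_k$ and writing $v := H_{k+1} y_k$, $c := y_k^T H_{k+1} y_k = v^T y_k$ gives $(1 + \alpha_k c)\, v = H_k y_k + \alpha_k (s_k^T y_k) s_k$. Taking the inner product with $y_k$ converts this to the scalar quadratic
\begin{equation*}
\alpha_k c^2 + c = y_k^T H_k y_k + \alpha_k (s_k^T y_k)^2.
\end{equation*}
Since $H_{k+1} \succ 0$ forces $c \geq 0$, I pick the nonnegative root, and a direct calculation shows that $1 + \alpha_k c = \gamma_k$ with $\gamma_k$ exactly as in the statement.

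The main obstacle is precisely this nonlinear matrix equation: a generic Riccati-style attack would be awkward, but the rank-one form of $H_{k+1} y_k y_k^T H_{k+1}$ collapses the entire equation onto the scalar $c$ and the vector $v$, which is the decisive reduction. Once $\gamma_k$ is pinned down, back-substitution yields $v = \gamma_k^{-1}(H_k y_k + \alpha_k (s_k^T y_k) s_k)$, and inserting this into $H_{k+1} = H_k + \alpha_k s_k s_k^T - \alpha_k v v^T$ (the original matrix equation solved for $H_{k+1}$) delivers the stated update formula. Positive definiteness of $H_{k+1}$ is then immediate from $H_{k+1} = (B^\star)^{-1}$ with $B^\star \succ 0$.
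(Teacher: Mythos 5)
Your proof is correct, and its computational core --- strict convexity of $\Upsilon$, the stationarity condition $M_k = B^{-1} + \alpha_k B^{-1}y_ky_k^TB^{-1}$, the collapse of the rank-one nonlinearity onto the scalar quadratic $\alpha_k c^2 + c = y_k^TH_ky_k + \alpha_k(s_k^Ty_k)^2$, and the selection of the nonnegative root --- coincides with the paper's; the paper reaches the same equations via the ansatz $B^{-1} = H_k + \alpha_k(s_ks_k^T - u_ku_k^T)$ with $u_k = B^{-1}y_k$, which is exactly your $v$. Where you genuinely diverge is in how existence and positive definiteness are secured. You prove coerciveness of $\Upsilon$ on the open cone up front (via the eigenvalue-wise bound $\tr(BM_k) - \log\det B \ge \sum_i\bigl(\lambda_{\min}(M_k)\lambda_i - \log\lambda_i\bigr)$), so a minimizer $B^\star \succ 0$ exists before any computation and positive definiteness of $H_{k+1} = (B^\star)^{-1}$ comes for free. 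The paper instead takes existence essentially for granted, computes the critical point, and then verifies positive definiteness of the explicit formula directly by exhibiting $H_{k+1}$ as a Schur complement of $\gamma_k^2/\alpha_k$ in an auxiliary matrix built from $H_k + \alpha_ks_ks_k^T$. Your route is logically tighter --- the paper's phrase ``the optimal solution \dots\ must satisfy'' quietly presupposes the existence you actually prove --- while the paper's Schur-complement check has the separate virtue of certifying positive definiteness of the update formula on its own, independently of the variational derivation. The only cosmetic slip in your write-up is the claim that ``all remaining terms are nonnegative'': the constant $-2\alpha_ks_k^Ty_k - \log\det H_k$ need not be, but constants are irrelevant to coerciveness, so nothing breaks.
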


\begin{proof}
	We begin by showing that the objective function $\Upsilon(B)$ of (\ref{eqn:soft_QN_problem}) is strictly convex.
	Since $\tr(BH_k)-\log(\det(BH_k))$ has been proven to be strictly convex on the cone of positive definite matrices in \cite{Fletrcher1991Variational}, we only need to show that the penalty term
	\begin{equation*}
		\alpha_k \|s_k - B^{-1}y_k\|_B^2 = \alpha_k( s_k^T B s_k - 2 s_k^Ty_k + y_k^T B^{-1}y_k),
	\end{equation*}
	is convex.  We see that the first term is linear in $B$ and therefore convex, while the second is constant. Regarding the third term, taking the inverse of a positive definite matrix is a convex operation \cite{NORDSTROM20111489} in the sense that for any positive definite matrices $X$ and $Y$, and scalar $\theta \in [0,1]$, it holds that
	\begin{equation*}
		(\theta  X + (1 - \theta)Y)^{-1} \preceq \theta  X^{-1} + (1 - \theta )Y^{-1}.
	\end{equation*}
	From this, it follows that 
	\begin{equation*}
		y_k^T (\theta X + (1 - \theta )Y)^{-1} y_k \leq \theta y_k^T X^{-1}y_k + (1 - \theta )y_k^TY^{-1}y_k,
	\end{equation*}
	showing that the third term is convex too. 
	As a result, $\Upsilon(B)$ is strictly convex on the cone of positive definite matrices, and the optimal solution to (\ref{eqn:soft_QN_problem}) is unique and must satisfy the first-order optimality condition
	\begin{equation}\label{eqn:criticalpoint}
		\nabla \Upsilon(B) = H_k - B^{-1} + \alpha_k(s_k s_k^T - B^{-1} y_k y_k^T B^{-1}) = 0.
	\end{equation}
	Since the difference between $B^{-1}$ and $H_k$ is a symmetric rank-2-matrix, it is natural to make the following ansatz: 
	\begin{equation*}
		B^{-1} = H_k + \alpha_k (s_k s_k^T - u_k u_k^T).
	\end{equation*}
	From Equation~\eqref{eqn:criticalpoint}, we can see that the above equality holds if $u_k = B^{-1}y_k$ or $u_k = -B^{-1}y_k$. Without loss of generality, we choose 
	\begin{equation}\label{eqn:uk2}
		u_k = B^{-1}y_k = H_k y_k + \alpha_k (s_k^Ty_k) s_k - \alpha_k (u_k^Ty_k) u_k.
	\end{equation}
	To determine $u_k$, we first determine the unknown quantity 
	\begin{equation*}
		u_k^Ty_k = y_k^T u_k = y_k^T H_k y_k + \alpha_k (s_k^Ty_k)^2 - \alpha_k (u_k^Ty_k)^2.
	\end{equation*}
	Solving for $u_k^Ty_k$ yields
	\begin{equation*}
		u_k^Ty_k = -\frac{1}{2\alpha_k} \pm \sqrt{\frac{1}{4 \alpha_k^2}+\frac{y_k^TH_ky_k}{\alpha_k}+(s_k^Ty_k)^2}.
	\end{equation*}
	There are two possible solutions, but for $B^{-1}$ to be positive definite, we need that $y_k^TB^{-1}y_k = u_k^T y_k > 0$, so we use the positive solution. By Equation~\eqref{eqn:uk2},
	\begin{equation*}
		u_k = \frac{1}{1+\alpha_k u_k^Ty_k} (H_ky_k + \alpha_k (s_k^Ty_k)s_k),
	\end{equation*}
	so plugging in the obtained value of $u_k^Ty_k$ gives 
	\begin{equation*}
		u_k = \frac{H_ky_k + \alpha_k (s_k^Ty_k)s_k}{0.5 + \sqrt{0.25 + \alpha_k y_k^TH_ky_k+\alpha_k^2(s_k^Ty_k)^2}}.
	\end{equation*}
	Thus, setting $H_{k+1}$ to the value of $B^{-1}$ at the critical point leads to the update formula
	\begin{equation}
		H_{k+1} = H_k + \alpha_k s_k s_k^T - \frac{\alpha_k}{\gamma_k^2}(H_ky_k + \alpha_k (s_k^Ty_k)s_k)(H_ky_k + \alpha_k (s_k^Ty_k)s_k)^T 
	\end{equation}
	with 
	\begin{equation*}
		\gamma_k = 0.5 + \sqrt{0.25 + \alpha_k y_k^TH_ky_k + \alpha_k^2 (s_k^Ty_k)^2}.
	\end{equation*}
	
	It only remains to validate that $H_{k+1}$ is positive definite if $H_k$ is. To this end, we first
	observe that $H_{k+1}$ is the Schur complement of $\frac{\gamma_k^2}{\alpha_k}$ in the matrix
	\begin{equation*}
		S := \begin{bmatrix} H_k + \alpha_k s_k s_k^T & (H_k + \alpha_k s_k s_k^T)y_k \\ y_k^T(H_k + \alpha_k s_k s_k^T) & \frac{\gamma_k^2}{\alpha_k} \end{bmatrix}.
	\end{equation*}
	Since $\frac{\gamma_k^2}{\alpha_k}$ is positive, $H_{k+1}$ is positive definite if and only if $S$ is. Furthermore, $S$ is positive definite if and only if $H_k+\alpha_k s_ks_k^T$ and its Schur complement in $S$ is. But $H_k+\alpha_k s_ks_k^T$ must be positive definite since $H_k$ is, so it remains to check whether its Schur complement is positive. The Schur complement is 
	\begin{align*}
		&\frac{\gamma_k^2}{\alpha_k} - y_k^T (H_k+\alpha_k s_ks_k^T) (H_k+\alpha_k s_ks_k^T)^{-1}(H_k+\alpha_k s_ks_k^T) y_k \\
		=& \frac{0.5 + \alpha_k y_k^TH_ky_k + \alpha_k^2 (s_k^Ty_k)^2 + \sqrt{0.25 + \alpha_k y_k^T H_ky_k +\alpha_k^2 (s_k^Ty_k)^2}}{\alpha_k} \\
		& - y_k^T (H_k+\alpha_k s_ks_k^T)y_k \\
		=& \frac{1}{2 \alpha_k} + \sqrt{\frac{1}{4 \alpha_k^2} + \frac{y_k^T H_ky_k}{\alpha_k} + (s_k^Ty_k)^2},
	\end{align*}
	which is indeed positive. 
	The proof is complete.
\end{proof}

\subsection{Properties of the soft QN update}

We will now establish four attractive properties of the soft QN update: it always generates descent directions, it recovers BFGS in the limit, it treats positive and negative curvature equally, and it is scale invariant. Some of these properties are immediate, while others require more detailed proofs and arguments. Nevertheless, we believe that they are all essential features that contribute to the efficacy of the method. 

\subsubsection{Soft QN always generates descent directions}
A remarkable feature of the soft QN update is that it guarantees that $H_k$ remains positive definite for any $\alpha_k>0$, even if $s_k^Ty_k\leq 0$. As mentioned in Section~\ref{sec:QN_methods}, this guarantees that $-H_k \nabla f(x_k)$ is a descent direction, and that it is the unique minimizer of the quadratic model function used to approximate the actual loss function. 

\subsubsection{Soft QN recovers BFGS in the limit}\label{sec:limit_of_soft_QN}
The proposed soft QN method is derived as the solution to a matrix optimization problem that is a relaxed version of the one solved by BFGS. It is therefore natural to expect that the soft QN update converges to the BFGS update when $\alpha_k \to \infty$, provided that
the BFGS update preserves positive definiteness. The next theorem confirms that this is indeed the case.
\begin{theorem} \label{thm:recovery}
	If $s_k^Ty_k > 0$, the right hand side of Equation~\eqref{eqn:update} converges to 
	\begin{align*}
		\left (I - \frac{s_ky_k^T}{s_k^Ty_k} \right)H_k\left (I - \frac{y_k s_k^T}{s_k^Ty_k} \right) + \frac{s_ks_k^T}{s_k^Ty_k}
	\end{align*}
	as $\alpha_k \to \infty$.
\end{theorem}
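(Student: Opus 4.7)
The plan is to expand the rank-one correction in Equation~\eqref{eqn:update}, determine the limit of each resulting coefficient as $\alpha_k\to\infty$, and then recognise the sum as the standard BFGS formula. The only delicate point is that the two terms contributing to $s_ks_k^T$, namely the explicit $\alpha_k s_k s_k^T$ and the piece $-\alpha_k^3(s_k^Ty_k)^2 s_k s_k^T/\gamma_k^2$ arising from the expansion of the outer product, both blow up, so they must be combined before any limit is taken. Every other coefficient admits a well-defined limit by direct substitution.

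The workhorse of the argument is the algebraic identity obtained by isolating the square root in the definition of $\gamma_k$ and squaring both sides:
\begin{equation*}
\gamma_k^2 - \gamma_k \;=\; \alpha_k\, y_k^T H_k y_k + \alpha_k^2 (s_k^T y_k)^2.
\end{equation*}
Since $s_k^T y_k > 0$ by hypothesis and $y_k^T H_k y_k > 0$ because $H_k \succ 0$, this identity immediately yields $\gamma_k = \alpha_k (s_k^T y_k) + O(1)$ as $\alpha_k \to \infty$; consequently $\alpha_k/\gamma_k \to 1/(s_k^T y_k)$ and $\alpha_k^2/\gamma_k^2 \to 1/(s_k^T y_k)^2$.

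With those asymptotics in hand, I would expand
\begin{equation*}
(H_k y_k + \alpha_k (s_k^T y_k) s_k)(H_k y_k + \alpha_k (s_k^T y_k) s_k)^T
\end{equation*}
into three pieces proportional to $H_k y_k y_k^T H_k$, $H_k y_k s_k^T + s_k y_k^T H_k$, and $s_k s_k^T$, and substitute into \eqref{eqn:update}. The first contributes a coefficient $-\alpha_k/\gamma_k^2 \to 0$, while the second contributes $-\alpha_k^2 (s_k^T y_k)/\gamma_k^2 \to -1/(s_k^T y_k)$. For the indeterminate $s_k s_k^T$ coefficient, I would invoke the identity above in the rearranged form $\gamma_k^2 - \alpha_k^2 (s_k^T y_k)^2 = \gamma_k + \alpha_k y_k^T H_k y_k$ to rewrite
\begin{equation*}
\alpha_k - \frac{\alpha_k^3 (s_k^T y_k)^2}{\gamma_k^2} \;=\; \frac{\alpha_k}{\gamma_k} + \frac{\alpha_k^2\, y_k^T H_k y_k}{\gamma_k^2} \;\longrightarrow\; \frac{1}{s_k^T y_k} + \frac{y_k^T H_k y_k}{(s_k^T y_k)^2}.
\end{equation*}

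Assembling the three limits produces
\begin{equation*}
H_k - \frac{H_k y_k s_k^T + s_k y_k^T H_k}{s_k^T y_k} + \left(\frac{1}{s_k^T y_k} + \frac{y_k^T H_k y_k}{(s_k^T y_k)^2}\right) s_k s_k^T,
\end{equation*}
and a one-line expansion of the claimed BFGS limit confirms that the two expressions agree. The main obstacle is precisely the cancellation in the $s_k s_k^T$ coefficient: without the quadratic identity satisfied by $\gamma_k$, a termwise limit would diverge, so that identity is effectively doing all the work.
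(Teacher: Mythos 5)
Your proof is correct, and it rests on the same two facts as the paper's: the asymptotics $\gamma_k = \alpha_k (s_k^Ty_k) + O(1)$ and the exact quadratic relation $\gamma_k^2 - \gamma_k = \alpha_k y_k^TH_ky_k + \alpha_k^2(s_k^Ty_k)^2$ (which the paper uses in the equivalent written-out form $\gamma_k^2 = 0.5 + \alpha_k y_k^TH_ky_k + \alpha_k^2(s_k^Ty_k)^2 + \sqrt{\cdots}$). The difference is one of bookkeeping: the paper regroups the update so that it mirrors the BFGS product structure $\bigl(I - c_k s_ky_k^T\bigr)H_k\bigl(I - c_k y_ks_k^T\bigr) + d_k s_ks_k^T - (\alpha_k/\gamma_k^2)H_ky_ky_k^TH_k$ and then passes to the limit in each of the three blocks, which forces it to analyze the somewhat opaque coefficient $\alpha_k\bigl(1 - \alpha_k^2(s_k^Ty_k)^2/\gamma_k^2 - \alpha_k^3(s_k^Ty_k)^2 y_k^TH_ky_k/\gamma_k^4\bigr)$; you instead expand fully into the four elementary matrices $H_k$, $H_ky_ky_k^TH_k$, $H_ky_ks_k^T+s_ky_k^TH_k$, $s_ks_k^T$, resolve the single indeterminate $s_ks_k^T$ coefficient via $\gamma_k^2 - \alpha_k^2(s_k^Ty_k)^2 = \gamma_k + \alpha_k y_k^TH_ky_k$, and match the result against the expanded BFGS formula. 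The two computations are equivalent (the cross terms you and the paper track cancel identically), but your version isolates the only genuinely delicate cancellation more cleanly, at the small cost of a final expansion of the BFGS limit to verify agreement.
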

\begin{proof}
	Equation~\eqref{eqn:update} can be rewritten as
	\begin{align*}
		H_{k+1} =& H_k + \alpha_k s_k s_k^T \\
		&- \frac{\alpha_k}{\gamma_k^2}(H_ky_ky_k^TH_k + \alpha_k (s_k^Ty_k)(H_ky_ks_k^T+s_ky_k^TH_k)+\alpha_k^2(s_k^Ty_k)^2s_ks_k^T) \\
		= & \left (I - \frac{\alpha_k^2 s_k^Ty_k}{\gamma_k^2}s_ky_k^T \right)H_k \left (I - \frac{\alpha_k^2 s_k^Ty_k}{\gamma_k^2}y_ks_k^T \right) \\ 
		&+ \alpha_k \left(1 - \frac{\alpha_k^2(s_k^Ty_k)^2}{\gamma_k^2} - \frac{\alpha_k^3 (s_k^Ty_k)^2y_k^TH_ky_k}{\gamma_k^4}\right)s_k s_k^T - \frac{\alpha_k}{\gamma_k^2}H_ky_ky_k^TH_k
	\end{align*}
	and should be compared to the BFGS update:
	\begin{align*}
		H_{k+1} = \left (I - \frac{1}{s_k^Ty_k}s_ky_k^T \right)H_k\left (I - \frac{1}{s_k^Ty_k}y_k s_k^T \right) + \frac{1}{s_k^Ty_k}s_ks_k^T.
	\end{align*}
	To show that the former update converges to the latter, we will show that the first two terms in our update converge to the corresponding terms in the BFGS update, while the third term vanishes.  
	We first show that $\alpha_k^2s_k^Ty_k/\gamma_k^2$ converges to $1/s_k^Ty_k$. Since 
	\begin{equation*}
		\gamma_k^2 = 0.5 + \alpha_k y_k^TH_ky_k + \alpha_k^2 (s_k^Ty_k)^2 + \sqrt{0.25 + \alpha_k y_k^T H_ky_k +\alpha_k^2 (s_k^Ty_k)^2},
	\end{equation*}
	the dominant term in the denominator becomes $\alpha_k^2 (s_k^Ty_k)^2$ as $\alpha_k \to \infty$, and therefore, 
	\begin{equation*}
		\lim_{\alpha_k \to \infty} \frac{\alpha_k^2s_k^Ty_k}{\gamma_k^2} = \lim_{\alpha_k \to \infty} \frac{\alpha_k^2s_k^Ty_k}{\alpha_k^2(s_k^Ty_k)^2} = \frac{1}{s_k^Ty_k}.
	\end{equation*}
	Hence, the first term in our update converges to the first term in the BFGS update. 
	We also see that the third term in our update, $- \frac{\alpha_k}{\gamma_k^2}H_ky_ky_k^TH_k$, vanishes as $\alpha_k \to \infty$ since the numerator grows proportionally to $\alpha_k$ while the denominator grows as $\alpha_k^2$. It remains to show that the second term in our update converges to the second term in the BFGS update, \emph{i.e.} that
	\begin{equation*}
		\lim_{\alpha_k \to \infty} \alpha_k \left(1 - \frac{\alpha_k^2(s_k^Ty_k)^2}{\gamma_k^2} - \frac{\alpha_k^3 (s_k^Ty_k)^2y_k^TH_ky_k}{\gamma_k^4}\right) = \frac{1}{s_k^Ty_k}.
	\end{equation*}
	We have that 
	\begin{align*}
		& \alpha_k \left(1 - \frac{\alpha_k^2(s_k^Ty_k)^2}{\gamma_k^2} - \frac{\alpha_k^3 (s_k^Ty_k)^2y_k^TH_ky_k}{\gamma_k^4}\right) \\
		=&  \alpha_k \Bigg (\frac{ 0.5 + \alpha_k y_k^TH_ky_k + \sqrt{0.25 + \alpha_k y_k^T H_ky_k +\alpha_k^2 (s_k^Ty_k)^2}}{\gamma_k^2}  - \frac{\alpha_k^3 (s_k^Ty_k)^2y_k^TH_ky_k}{\gamma_k^4} \Bigg ) \\
		=& \frac{\alpha_k}{\gamma_k^2} \left ( 0.5 + \sqrt{0.25 + \alpha_k y_k^T H_ky_k + \alpha_k^2 (s_k^Ty_k)^2} \right) + \frac{\alpha_k^2}{\gamma_k^2} \left( y_k^T H_k y_k \left(1 - \frac{(s_k^Ty_k)^2\alpha_k^2}{\gamma_k^2} \right) \right).
	\end{align*}
	When $\alpha_k \to \infty$, the term $\alpha_k^2 (s_k^Ty_k)$ will be the dominant term under the square root, and the first term will approach $\alpha_k^2 |s_k^Ty_k|/\gamma_k^2$. When $s_k^Ty_k > 0$, this is $\alpha_k^2 s_k^Ty_k/\gamma_k^2$ which, as we have already shown, converges to $1/s_k^Ty_k$ as $\alpha_k \to \infty$. The second term will converge to 0, as $\alpha_k^2/\gamma_k^2$ converges to $1/(s_k^Ty_k)^2$ and $1 - (s_k^Ty_k)^2\alpha_k^2/\gamma_k^2$ goes to 0. Thus, we recover the BFGS update formula as $\alpha_k\rightarrow \infty$.
\end{proof}

\subsubsection{Soft QN treats positive and negative curvature equally}

If we repeat the argument in Theorem~\ref{thm:recovery} for  $s_k^Ty_k < 0$, we find that soft QN tends to a BFGS update where $y_k$ is replaced by $-y_k$. Thus, 
the limiting behavior of the soft QN update as $\alpha_k\rightarrow \infty$ is a BFGS
update with a reversed sign for $y_k$ whenever $s_k^Ty_k <0$. One can see that soft QN treats the curvature as if it was positive instead of negative in this limiting case. This is no coincidence: the soft QN update formula \eqref{eqn:update} does not depend on the sign of $y_k$ or $s_k$, \emph{i.e.} replacing either $y_k$ by $-y_k$ or $s_k$ by $-s_k$ would result in the same update. This means that any quasi-Newton algorithm based on this update is only concerned with the absolute value of the curvature in the direction $s_k$ and not the sign, and will re-scale the gradient to take longer steps in directions with lower absolute curvature. 

To see why this could be advantageous, consider the argument laid out in \cite{10.5555/2969033.2969154} to motivate a modification of Newton's method from trust-region principles. The authors consider an iterative algorithm $x_{k+1} = x_k + p_k$ where $p_k$ is the solution to the trust-region problem
\begin{equation}\label{eqn:Newton_tr}
	\begin{aligned}
		& \underset{p}{\text{minimize}} & f(x_k) + p^T \nabla f(x_k) \\
		&\text{ subject to} & p^T | \nabla^2 f(x_k) | p \leq c.
	\end{aligned}
\end{equation}
Here, $|\nabla ^2 f(x_k)|$ denotes the matrix obtained by taking the Hessian evaluated at $x_k$ and setting all its eigenvalues to their absolute values while retaining its eigenvectors, while $c>0$ is the trust-region radius. As usual for trust-region problems, the objective function is replaced by a model based on its Taylor expansion, although trust-region problems typically used both first- and second-order terms. 
An upper bound of the second-order term is instead used to determine the shape of the trust-region. In contrast to trust-region methods that just use a 2-norm constraint, this formulation takes into account that the first-order model becomes inaccurate quicker in directions of high curvature, whether positive or negative. Solving this trust-region update gives that $p_k$ is proportional to $- |\nabla^2 f(x_k)|^{-1} \nabla f(x_k)$. The method proposed in \cite{10.5555/2969033.2969154}, called saddle-free Newton, is based on this insight. A similar idea is the basis for the non-convex Newton method proposed in \cite{Paternain2019NewtonNonconvex}. While the practical methods proposed in these works make additional modifications to the iteration $x_{k+1} = x_k - |\nabla^2 f(x_k)|^{-1} \nabla f(x_k)$ to be scalable in higher dimensions, both works argue that it is advantageous to move further in directions of low absolute curvature in order to escape from saddle points.

While both saddle-free Newton and non-convex Newton, in contrast to the regular Newton method, escape saddle points by moving far in directions corresponding to eigenvalues with a low absolute value, they are unable to detect directions of low curvature that are linear combinations of eigenvectors with eigenvalues of opposite sign. Following the trust-region principle, it would be desirable for a method to identify such directions and re-scale the gradient accordingly to obtain a suitable step. Here, a quasi-Newton method that updates its Hessian approximation based on the absolute value of the curvature can have an advantage. To illustrate this, we consider the 2-dimensional toy-example of optimizing
\begin{align*}
	f(x,y) &=
	(x-0.7)^2 ( (x+0.7)^2+0.1) + (y+0.7)^2 ( (y-0.7)^2+0.1).
\end{align*} 

\begin{figure}
	\centering
	\includegraphics[width = 0.9\textwidth]{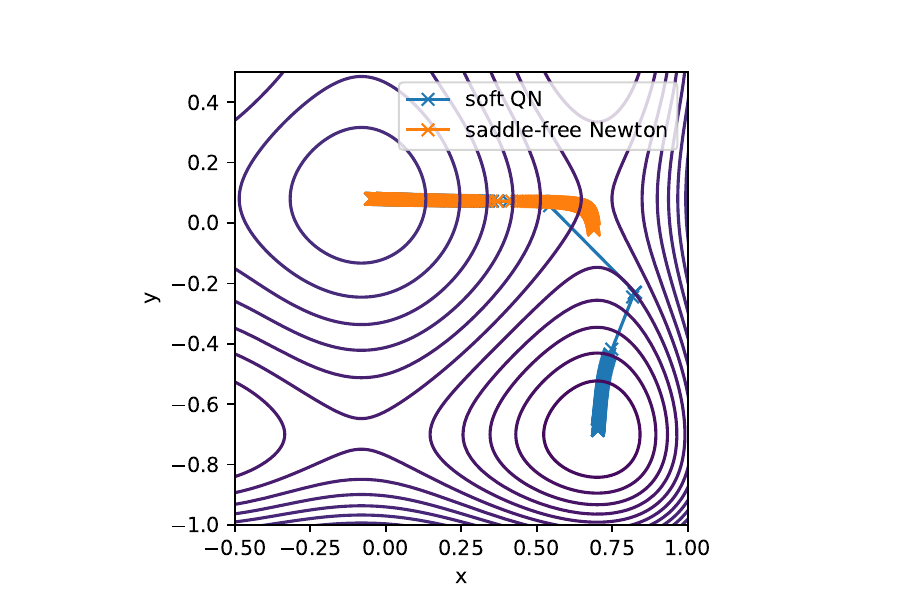}
	\caption{~The soft QN method detects a diagonal direction of low curvature and takes a large step along it. The saddle-free Newton method is unable to perceive the low curvature in that direction and moves further toward the saddle point before changing its course.}
	\label{fig:toy_example}
\end{figure}
This function has one local maximum, four saddle points and four local minima, with one of them also being a global minimum. The function is separable in the two variables, so the Hessian is diagonal. In Figure~\ref{fig:toy_example}, we show plots of 500 iterations of $x_{k+1} = x_k - \eta |\nabla^2 f(x_k)|^{-1} \nabla f(x_k)$ (denoted as saddle-free Newton), against iterations of  $x_{k+1} = x_k - \eta H_k \nabla f(x_k)$ where $H_k$ is updated according to Equation~\eqref{eqn:update} and $H_0$ is $|\nabla^2 f(x_0)|^{-1}$. We set $ x_0 = (-0.05,0.08)$, the step-size $\eta = 0.01$ and the penalty parameter $\alpha_k = 8 \cdot 10^5$.  The parameters have been chosen to highlight the behavior that we want to illustrate. While the step-size of Newton-type methods is typically set to 1 or determined by line-search, the short step-size and high penalty parameter ensure that the methods will take long steps if and only if they discover a direction of low curvature. 
%
%
This phenomenon is illustrated in Figure~\ref{fig:toy_example}: at the point $(0.543,0.0574)$, the Hessian has the values $1.78$ and $-1.72$ on its diagonal. The eigenvalues are of opposite signs but very close in magnitude, meaning that saddle-free Newton treats the Hessian almost as a scaled identity matrix. Soft QN, on the contrary, detects that the curvature is close to 0 for a diagonal direction, takes a long step in that direction to end up at $(0.827,-0.230)$ in the next iteration. This sets it far ahead of saddle-free Newton and allows it to find the local minimum within 500 iterations.

\subsubsection{Soft QN is scale invariant}

Scale invariance, \emph{i.e.} invariance under linear transformations of the variables, is a desirable property of optimization algorithms since it guarantees that the algorithm is insensitive to ill-conditioning of the Hessian. To understand why, consider a strongly convex problem with an ill-conditioned Hessian.  
By a 
linear transformation of variables, we can transform such a problem into an equivalent one with an arbitrary positive definite Hessian. If the algorithm is scale invariant, then the iterations will not be affected by the variable transformation, and the method will perform equally well on the original (ill-conditioned) problem formulation as the transformed (well-conditioned) one. The next result shows that a scale-invariance property holds for the soft QN update.
\begin{proposition}
	The soft QN update is scale-invariant, i.e., under a change of variables  $\tilde{x}_k = A x_k$, where $A$ is an invertible square matrix, $\tilde{H}_k = A H_k A^T$ implies that $\tilde{H}_{k+1} = A H_{k+1} A^T$, where $\tilde{H}_k$ is the Hessian inverse approximation associated with the variables $\tilde{x}_k.$ 
\end{proposition}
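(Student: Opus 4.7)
The plan is to verify the invariance by direct substitution into the update formula \eqref{eqn:update}, after first determining how the transformation $\tilde{x}_k = A x_k$ affects the quantities $s_k$, $y_k$, and the scalar $\gamma_k$. The whole argument is essentially an exercise in chasing $A$ and $A^T$ factors through the formula, so the main obstacle is purely notational bookkeeping rather than any conceptual difficulty.

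\paragraph{Step 1: Transformation of $s_k$ and $y_k$.} First I would determine how $s_k$ and $y_k$ transform. Since $\tilde{x}_k = A x_k$ directly gives $\tilde{s}_k = A s_k$. For $y_k$, I would note that the transformed objective is $\tilde{f}(\tilde{x}) = f(A^{-1}\tilde{x})$, so by the chain rule $\nabla \tilde{f}(\tilde{x}) = A^{-T}\nabla f(A^{-1}\tilde{x})$. Consequently $\tilde{y}_k = A^{-T} y_k$. The key observation is that under these transformations the pairing $s_k^T y_k$ is an invariant: $\tilde{s}_k^T \tilde{y}_k = s_k^T A^T A^{-T} y_k = s_k^T y_k$.

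\paragraph{Step 2: Invariance of the auxiliary quantities.} Next I would check that every building block of the update transforms covariantly. Assuming inductively $\tilde{H}_k = A H_k A^T$, I compute $\tilde{H}_k \tilde{y}_k = A H_k A^T A^{-T} y_k = A H_k y_k$, and therefore $\tilde{y}_k^T \tilde{H}_k \tilde{y}_k = y_k^T H_k y_k$. Combined with $\tilde{s}_k^T \tilde{y}_k = s_k^T y_k$ from Step 1, the scalar $\tilde{\gamma}_k$ (with the same $\alpha_k$) equals $\gamma_k$. Moreover, the vector appearing in the rank-one correction transforms as
\begin{equation*}
\tilde{H}_k \tilde{y}_k + \alpha_k (\tilde{s}_k^T \tilde{y}_k)\tilde{s}_k \;=\; A\bigl(H_k y_k + \alpha_k (s_k^T y_k) s_k\bigr).
\end{equation*}

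\paragraph{Step 3: Assembling the update.} Finally I would substitute the expressions from Steps 1 and 2 into \eqref{eqn:update} applied to the tilded quantities. Each summand acquires a left factor of $A$ and a right factor of $A^T$: the term $\tilde{H}_k = A H_k A^T$, the term $\alpha_k \tilde{s}_k \tilde{s}_k^T = \alpha_k A s_k s_k^T A^T$, and the rank-one correction which, using the transformation identity above together with $\tilde{\gamma}_k = \gamma_k$, equals $\tfrac{\alpha_k}{\gamma_k^2} A (H_k y_k + \alpha_k (s_k^T y_k) s_k)(H_k y_k + \alpha_k (s_k^T y_k) s_k)^T A^T$. Factoring $A$ on the left and $A^T$ on the right yields $\tilde{H}_{k+1} = A H_{k+1} A^T$, completing the induction and establishing scale invariance. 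The base case requires the user to initialize consistently, i.e.\ $\tilde{H}_0 = A H_0 A^T$, which is natural under a change of variables.
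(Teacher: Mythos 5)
Your proposal is correct and follows essentially the same route as the paper's proof: establish $\tilde{s}_k = A s_k$, $\tilde{y}_k = A^{-T} y_k$, deduce the invariance of $s_k^T y_k$, $y_k^T H_k y_k$ and hence of $\gamma_k$, and then factor $A$ and $A^T$ out of each term of the update. The only additions are the explicit chain-rule justification for the gradient transformation and the remark about the base case $\tilde{H}_0 = A H_0 A^T$, both of which the paper leaves implicit.
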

\begin{proof}
	Defining $\tilde{y}_k$ and $\tilde{s}_k$ from $\tilde{x}_k$ analogously to how $s_k$ and $y_k$ are defined from $x_k$, we have that $\tilde{s}_k = A s_k$ and $\tilde{y}_k = A^{-T} y_k$. Since $\tilde{y_k}^T \tilde{s}_k = y_k A^{-1} A s_k = y_k^T s_k$ and $\tilde{y}_k^T \tilde{H}_k \tilde{y}_k = y_k A^{-1} A H_k A^T A^{-T}y_k = y_k^T H_k y_k$, so the quantity $\gamma_k$ from Equation~\eqref{eqn:update} is scale invariant. Therefore, 
	
	\begin{equation*}
		\begin{aligned}
			\tilde{H}_{k+1}  =& \tilde{H}_k + \alpha_k \tilde{s}_k \tilde{s}_k^T - \frac{\alpha_k}{\gamma_k^2}(\tilde{H}_k \tilde{y}_k + \alpha_k (\tilde{s}_k^T \tilde{y}_k)\tilde{s}_k)(\tilde{H}_k \tilde{y}_k + \alpha_k (\tilde{s}_k^T\tilde{y}_k)\tilde{s}_k)^T \\
			=& A H_k A^T + \alpha_k A s_k s_k^T A^T - \\ & \frac{\alpha_k}{\gamma_k^2} ( A H_k A^{T} A^{-T} y_k + \alpha_k (s_k^Ty_k) A s_k)( A H_k A^{T} A^{-T} y_k + \alpha_k (s_k^Ty_k) A s_k)^T \\
			=& A (H_k + \alpha_k s_k s_k^T - \alpha_k (H_k y_k + \alpha_k (s_k^Ty_k) s_k)(H_k y_k + \alpha_k (s_k^Ty_k) s_k)^T ) A^T \\
			=& A H_{k+1} A^T.
		\end{aligned}
	\end{equation*}
\end{proof}

The preservation of scale-invariance is a property that soft QN shares with all quasi-Newton methods in the Broyden class \cite{NoceWrig06}. However, if $H_0$ is not scale-invariant  then the subsequent  Hessian inverse approximations will not be scale-invariant either.  While it is possible to choose a scale-invariant $H_0$, e.g. setting it equal to the true Hessian at the initial point, this is typically computationally expensive and it is common to simply set $H_0$ to the identity matrix, which corresponds to $\tilde{H}_0 = A A^T$ under the change of variables $\tilde{x}_k = A x_k$. However, as the iteration number increases, the influence of the initial matrix diminishes, and one can argue that the  Hessian inverse approximation becomes ``more scale invariant" and thereby more resistant to ill-conditioning. This is confirmed empirically with traditional quasi-Newton methods outperforming gradient descent on ill-conditioned problems \cite{NoceWrig06}.

\subsection{Comparison with SP-BFGS}

The SP-BFGS method~\cite{Irwin2023} is similar to soft QN, but it is based on solving a relaxed version of problem~\eqref{eqn:BFGS_problem} instead of problem~\eqref{eqn:BFGS_problem_2}. Specifically, it is derived as the solution to
\begin{equation}\label{eqn:SP-BFGS problem}
	\begin{aligned}
		& \underset{H}{\text{minimize}}
		& & \|W_k^{(1/2)} (H - H_k) W_k^{(1/2)}\|_F^2 + \frac{\beta_k}{2} \|W_k^{(1/2)} (Hy_k - s_k)\|_2^2 \\
		& \text{subject to} & & H = H^T
	\end{aligned}
\end{equation}
for some strictly positive penalty parameter $\beta_k>0$. In this formulation, the inverse secant condition is weighted by the (principal square root of the) weight matrix $W_k$, which can be any positive definite matrix that satisfies the secant condition. This weighting ensures scale-invariance. For any such $W_k$, the solution of this problem leads to the update formula 
\begin{equation}\label{eqn:SP-BFGS update}
	H_{k+1} = \left(I-\omega_k s_k y_k^T \right) H_k \left(I - \omega_k y_k s_k^T \right) + \omega_k \left( \dfrac{\pi_k}{\omega_k} + (\pi_k- \omega_k) y_k^T H_k y_k \right) s_k s_k^T,
\end{equation}
where 
\begin{equation*}
	\pi_k = \dfrac{1}{s_k^Ty_k + \frac{1}{\beta_k}} \ , \ \omega_k = \dfrac{1}{s_k^T y_k + \frac{2}{\beta_k}}.
\end{equation*}
By Lemma 1 in~\cite{Irwin2023}, the update \eqref{eqn:SP-BFGS update} preserves positive definiteness when 
$s_k^Ty_k > -\beta_k^{-1}$. This means that it can still yield positive definite matrices when $s_k^T y_k < 0$, as long as the penalty parameter $\beta_k$ is not too large, and the update formula can be used as long as care is taken to choose $\beta_k$ appropriately. 
However, as discussed in Section~\ref{sec:QN_methods}, there is no matrix $W_k$ that satisfies the secant condition when $s_k^T y_k < 0$, so the matrix optimization problem~\eqref{eqn:SP-BFGS problem} is not justified and it is not clear in what sense  $H_{k+1}$ is an optimal update of $H_k$. In contrast, $H_{k+1}$  given by the proposed update formula \eqref{eqn:update} is positive definite for any value of $\alpha_k>0$, and it always exists as an optimal solution to the problem \eqref{eqn:soft_QN_problem}.

When the penalty parameters of soft QN and SP-BFGS approach infinity, the behavior of the update formulas is similar, but with one important difference: While the SP-BFGS update always  converges to the BFGS update, soft QN converges to a modified update where the sign of $y_k$ is changed if $s_k^T y_k$ is negative (cf. the discussion in Section~\ref{sec:limit_of_soft_QN}).  

\section{Convergence}\label{sec:Convergence}

In this section, we demonstrate how the soft QN update can be integrated into a quasi-Newton method with guaranteed convergence rate for strongly convex problems also in the presence of bounded errors in gradient and function evaluations. The integration relies on a uniform bound on the Hessian inverse approximation generated by soft QN. 
\subsection{Bounding the  Hessian inverse approximation}\label{sec:bound}
In this subsection, we derive a method to bound $H_k$ in a way that takes into account its size in the current iteration, allowing larger changes to the matrix if it is far from a desired upper or lower bound. We do so by having a different penalty parameter $\alpha_k$ in each iteration $k$ and choosing it based on upper and lower bounds that depend on a size measure of $H_k$. Several convergence proofs, e.g.~\cite{wang2017stochastic}, ~\cite{moritz2016linearly}, and~\cite{Irwin2023},  rely on upper and lower bounds on the eigenvalues of $H_k$, and we will do that here too. We denote the maximum eigenvalue of a matrix $A$ with real-valued eigenvalues by $\lambda_{\max}(A)$ and the minimum eigenvalue by $\lambda_{\min}(A)$.

\begin{lemma}\label{lemma:1}
	Given $\psi I \preceq H_{0} \preceq \Psi I$ with some constants $0<\psi\le \Psi <\infty$, if the penalty parameter $\alpha_k$ satisfies 
	\begin{equation}\label{eq:alpha}
		\alpha_k \leq \min\left\{  \frac{\lambda_{\min}(H_k) - \psi}{(\|s_k\|_2 + \|H_ky_k\|_2)^2}, ~ \dfrac{\Psi-\lambda_{\max}(H_k)}{\|s_k\|_2^2} \right\}, 
	\end{equation}
	then the sequence $\{H_k\}_k$ generated by the proposed soft QN satisfies 
	\begin{equation}
		\psi I   \preceq H_k \preceq \Psi I, ~\forall k. 
	\end{equation}
\end{lemma}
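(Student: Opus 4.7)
The plan is to proceed by induction on $k$. The base case $k=0$ is immediate from the hypothesis $\psi I \preceq H_0 \preceq \Psi I$. For the inductive step, I would assume that $\psi I \preceq H_k \preceq \Psi I$ and work from the update formula
\begin{equation*}
H_{k+1} = H_k + \alpha_k s_k s_k^T - \frac{\alpha_k}{\gamma_k^2} v_k v_k^T,
\quad v_k := H_k y_k + \alpha_k(s_k^T y_k)s_k,
\end{equation*}
treating the upper and lower spectral bounds separately.

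For the upper bound, I would use that the subtracted term $\frac{\alpha_k}{\gamma_k^2} v_k v_k^T$ is positive semidefinite, so $H_{k+1} \preceq H_k + \alpha_k s_k s_k^T \preceq H_k + \alpha_k \|s_k\|_2^2 I$. Taking the maximum eigenvalue and using the second bound in \eqref{eq:alpha} immediately gives $\lambda_{\max}(H_{k+1}) \le \lambda_{\max}(H_k) + \alpha_k\|s_k\|_2^2 \le \Psi$. This part is straightforward.

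For the lower bound, I would bound the subtracted rank-one term by its operator norm: $v_k v_k^T \preceq \|v_k\|_2^2 I$, which yields $H_{k+1} \succeq H_k - (\alpha_k\|v_k\|_2^2/\gamma_k^2)\, I$. The triangle inequality gives $\|v_k\|_2 \le \|H_k y_k\|_2 + \alpha_k |s_k^T y_k|\,\|s_k\|_2$, so the target inequality $\lambda_{\min}(H_{k+1}) \ge \psi$ reduces to showing $\|v_k\|_2/\gamma_k \le \|H_k y_k\|_2 + \|s_k\|_2$, after which the first bound in \eqref{eq:alpha} finishes the job.

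The main obstacle (and the only nontrivial step) will be this last estimate, namely that $\|v_k\|_2 \le \gamma_k(\|H_k y_k\|_2 + \|s_k\|_2)$. The key observation is that from the definition
\begin{equation*}
\gamma_k = 0.5 + \sqrt{0.25 + \alpha_k y_k^T H_k y_k + \alpha_k^2(s_k^T y_k)^2},
\end{equation*}
one has both $\gamma_k \ge 1$ and $\gamma_k^2 \ge \alpha_k^2 (s_k^T y_k)^2$, the latter giving $\alpha_k |s_k^T y_k| \le \gamma_k$. Combining these, $\|v_k\|_2/\gamma_k \le \|H_k y_k\|_2/\gamma_k + (\alpha_k|s_k^T y_k|/\gamma_k)\|s_k\|_2 \le \|H_k y_k\|_2 + \|s_k\|_2$, which closes the induction. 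This second inequality is where the particular form of $\gamma_k$ pays off, and it is worth isolating since the bound $\alpha_k|s_k^T y_k|\le \gamma_k$ also underlies the positive definiteness argument of the preceding theorem.
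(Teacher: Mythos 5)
Your proposal is correct and follows essentially the same route as the paper: the upper bound via dropping the negative semidefinite rank-one term, and the lower bound via the estimate $\|v_k\|_2/\gamma_k \le \|H_k y_k\|_2 + \|s_k\|_2$, obtained exactly as in the paper by bounding the first summand with $\gamma_k \ge 1$ and the second with $\gamma_k \ge \alpha_k|s_k^T y_k|$. The only cosmetic difference is that you make the induction explicit, which the paper leaves implicit.
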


\begin{proof}
	From Equation~\eqref{eqn:update}, we have
	\begin{equation*}
		\lambda_{\max}(H_{k+1}) \leq \lambda_{\max}(H_k) + \alpha_k \|s_k\|_2^2.
	\end{equation*}
	Hence, we can ensure that the eigenvalues are smaller than some constant $\Psi$ by letting 
	\begin{equation*}
		\alpha_k \leq \dfrac{\Psi-\lambda_{\max}(H_k)}{\|s_k\|_2^2}.
	\end{equation*}
	To lower bound $H_k$, we observe from Equation~\eqref{eqn:update} that 
	\begin{equation*}
		\lambda_{\min}(H_{k+1}) \geq \lambda_{\min}(H_k) - \alpha_k \|u_k\|_2^2
	\end{equation*}
	with
	\begin{equation*}
		u_k = \frac{H_ky_k + \alpha_k (s_k^Ty_k)s_k}{0.5 + \sqrt{0.25 + \alpha_k y_k^TH_ky_k+\alpha_k^2(s_k^Ty_k)^2}}.
	\end{equation*}
	Since $u_k$ depends on $\alpha_k$, we want to eliminate $\|u_k\|_2$ from the above expression to derive a practical bound. We can do so by observing that
	\begin{align*}
		\|u_k\|_2 & = \frac{\|H_ky_k + \alpha_k (s_k^Ty_k)s_k\|_2}{0.5 + \sqrt{0.25 + \alpha_k y_k^TH_ky_k+\alpha_k^2(s_k^Ty_k)^2}} \\ & \leq \frac{\|H_ky_k \|_2 + \alpha_k \|(s_k^Ty_k)s_k\|_2}{0.5 + \sqrt{0.25 + \alpha_k y_k^TH_ky_k+\alpha_k^2(s_k^Ty_k)^2}} \\ & \leq \frac{\|H_k y_k\|_2}{1} + \frac{\alpha_k |s_k^Ty_k| \|s_k||_2}{\sqrt{\alpha_k^2 (s_k^Ty_k)^2}} = \|H_ky_k\|_2 + \|s_k\|_2.
	\end{align*}
	With this bound, $\lambda_{\min}(H_{k+1})$ is guaranteed to stay above $\psi$ if 
	\begin{equation*}
		\alpha_k \leq \frac{\lambda_{\min}(H_k) - \psi}{(\|s_k\|_2 + \|H_ky_k\|_2)^2}. 
	\end{equation*}
	This completes the proof. 
\end{proof}

While Lemma~\ref{lemma:1} shows that $H_k$ will remain bounded if $\alpha_k$ is small enough, the computational demands are something to consider if we want to use it in practice. While $\lambda_{\min}(H_k)$ and $\lambda_{\max}(H_k)$ can be computed with power iteration, doing so might be slow if there are several eigenvalues that are close in magnitude. However, note that one can replace the maximum eigenvalue of $H_k$ by an upper bound and the minimum eigenvalue by a lower bound and still ensure boundedness of $H_k$. There are several ways that such bounds can be calculated \cite{TrDetBounds} \cite{WOLKOWICZ1980471}, but to ensure that computing it does not overshadow other operations performed in the algorithm, e.g. the multiplication of $H_k$ with the gradient, we want the evaluation of the bound to scale at most as $O(n^2)$, where $n$ is the number of variables in the problem. An upper bound that satisfies this and works for any $n \times n$ complex matrix with real eigenvalues $A$ is

\begin{equation*}
	\lambda_{\max}(A) \leq \frac{\tr(A)}{n} + \sqrt{\dfrac{\frac{\tr(A^2)}{n}-\big (\frac{\tr(A)}{n} \big )^2}{n-1}}.
\end{equation*}
An inexpensive way of enforcing a lower bound of the lowest eigenvalue is to add a small bias term $\lambda I$ to the nominal $H_k$ before multiplying it with the gradient. Since $H_k$ is guaranteed to be positive definite no matter the values of $s_k$ and $y_k$, $\lambda_{\min}(H_k + \lambda I) \geq \lambda$. 

\subsection{Convergence guarantees}
With the uniform bound on the  Hessian inverse approximation established in Lemma \ref{lemma:1}, it is safe to use the  Hessian inverse approximations generated by soft QN to construct approximate Newton directions for solving the minimization problem \eqref{eq:objective} in the presence of general bounded noise.  
More specifically, assume that we want to optimize a function $\phi(x)$ but only have access to corrupted measurements of the function and the gradient 
\begin{equation}\label{eq:noisy-loss-g}
	f(x) := \phi(x) + n_f(x), ~g(x) := \nabla \phi(x) + n_g(x),
\end{equation}
where $n_f(x) $ and $n_g(x)$ are noises in the function and gradient, respectively. 
We make the following assumption on the noises.
\begin{assumption}\label{asm:bounded noise}
	There exist positive constants $e_f$ and $e_g$ such that
	$$|n_f(x)| \leq e_f, ~ \| n_g(x) \|_2 \leq e_g,\quad \forall x.$$ 
\end{assumption}
The soft QN algorithm for solving problem~\eqref{eq:objective} is summarized in Algorithm~\ref{alg}. 

\begin{algorithm}
	\caption{ Soft QN Algorithm}
	\label{alg}
	\begin{algorithmic}[1]
		\State {\bf Set} $\eta_k$, $H_0\succ 0$, $x_0$, and $g_0$
		\For{$k$=$0$, $1$, \ldots, $K-1$}
		\State Update $p_k =-H_k g_k$
		\State Update $x_{k+1} = x_k+\eta_k p_k$
		\State Update $g_{k+1} = g(x_{k+1})$
		\State Update $s_k = x_{k+1}-x_k$
		\State Update $y_k = g_{k+1}-g_k$
		\State Choose penalty parameter $\alpha_k$ satisfying \eqref{eq:alpha}
		\State Update $\gamma_k = 0.5 + \sqrt{0.25 + \alpha_k y_k^TH_ky_k + \alpha_k^2 (s_k^Ty_k)^2}$
		\State Update
		$$    H_{k+1} = H_k + \alpha_k s_k s_k^T - \frac{\alpha_k}{\gamma_k^2}(H_ky_k + \alpha_k (s_k^Ty_k)s_k)(H_ky_k + \alpha_k (s_k^Ty_k)s_k)^T $$
		\EndFor
		\State {\bf Output} $x_K$ 
	\end{algorithmic}
\end{algorithm}

For twice continuously differentiable, $L$-smooth and $\mu$-strongly convex functions, with bounded noise on gradient and function evaluations, the authors of \cite{Irwin2023} show that general quasi-Newton methods with bounded  Hessian inverse approximations converge linearly to a neighborhood of the optimal solution when the step-size is fixed or is generated by line-search. As these theorems are also applicable here, we state them and discuss how they apply to our quasi-Newton method.

\begin{assumption}\label{asm:lips}
	The gradient 
	$\nabla \phi $ is  Lipschitz continuous with Lipschitz constant $L$; namely for any $x, y \in \mathbb{R}^n$,
	$$
	\|\nabla \phi(x)-\nabla \phi(y)\| \leq L\|x-y\|.
	$$
\end{assumption}
\begin{assumption}\label{asm:strong-cvx}
	The function $\phi $ is  $\mu$-strongly convex, i.e.,  there exists a positive constant $\mu$ such that for all $x, y \in \mathbb{R}^n$:
	\begin{equation*}
		\phi(y)\ge \phi(x)+ \nabla \phi(x)^T(y-x)+\frac{\mu}{2}\|x-y\|^2. 
	\end{equation*}
\end{assumption}

The following theorem provides convergence guaranteed under a fixed step-size $\eta_k=\eta$. 
\begin{theorem}{(Fixed step-size $\eta_k=\eta$)} \label{thm: rate}
	Consider Algorithm \ref{alg}. Under Assumptions \ref{asm:bounded noise}--\ref{asm:strong-cvx}, if the step-size $\eta$ satisfies
	$$
	0<\eta \leq \frac{\varepsilon}{L \Psi^2}<\frac{\psi}{L \Psi}\left[\frac{A}{((1+A) \Psi+\psi)}\right], 
	$$
	where $0<\varepsilon<\frac{A \psi \Psi}{((1+A) \Psi+\psi)}$ and $A>0$ is some constant, 
	then for any  $x_k \notin \mathcal{N}_1\left(\psi, \Psi, A \Psi e_g\right)$, where
	\begin{equation*}
		\begin{aligned}
			\mathcal{N}_1\left(\psi, \Psi, A \Psi e_g\right) = \left\{x \mid \phi(x) \leq \phi^{\star}+\frac{1}{2 \mu}\left(\frac{(1+A) \Psi e_g}{\psi}\right)^2\right\},    
		\end{aligned}   
	\end{equation*}
	we have
	$$
	\begin{aligned}
		&\phi(x_{k+1})-\left(\phi(x^{\star})+\frac{1}{2 \mu}\left(\frac{(1+A) \Psi e_g}{\psi}\right)^2\right) \\
		\leq&(1-\eta\varepsilon \mu)\left(\phi(x_k)-\left(\phi(x^{\star})+\frac{1}{2 \mu}\left(\frac{(1+A) \Psi e_g}{\psi}\right)^2\right)\right).   
	\end{aligned}
	$$
\end{theorem}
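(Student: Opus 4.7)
The plan is to combine the standard $L$-smoothness descent inequality with the uniform spectral bounds $\psi I\preceq H_k \preceq \Psi I$ from Lemma~\ref{lemma:1}, and then convert a contraction on $\|\nabla\phi(x_k)\|^2$ into a contraction on $\phi(x_k)-\phi(x^\star)$ via strong convexity. Concretely, I start from
\begin{equation*}
\phi(x_{k+1})\le \phi(x_k)+\nabla\phi(x_k)^{\top}(x_{k+1}-x_k)+\tfrac{L}{2}\|x_{k+1}-x_k\|^2,
\end{equation*}
substitute the update $x_{k+1}-x_k=-\eta H_k(\nabla\phi(x_k)+n_g(x_k))$, and separate the clean curvature term from the two noise contributions.

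For the clean term, Lemma~\ref{lemma:1} gives $\nabla\phi(x_k)^{\top}H_k\nabla\phi(x_k)\ge \psi\|\nabla\phi(x_k)\|^2$, producing the principal negative contribution $-\eta\psi\|\nabla\phi(x_k)\|^2$. For the linear noise cross term, $|\nabla\phi(x_k)^{\top}H_k n_g(x_k)|\le \Psi e_g\|\nabla\phi(x_k)\|$ by Cauchy-Schwarz and the upper bound on $H_k$, which I split via Young's inequality with a free parameter governed by $A$. For the $L/2$ quadratic term, I use $\|H_k(\nabla\phi(x_k)+n_g(x_k))\|^2\le \Psi^2\bigl((1+A)\|\nabla\phi(x_k)\|^2+(1+1/A)e_g^2\bigr)$ with the same Young parameter $A$. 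Collecting and invoking the step-size restriction $\eta\le \varepsilon/(L\Psi^2)$, the $O(\eta^2)$ terms collapse into $O(\eta)$ terms with an extra factor $\varepsilon$; the strict inequality $\varepsilon<A\psi\Psi/\bigl((1+A)\Psi+\psi\bigr)$ is precisely what guarantees that the net coefficient of $\|\nabla\phi(x_k)\|^2$ is at most $-\eta\varepsilon$, while the constant part collects into a multiple of $\bigl((1+A)\Psi e_g/\psi\bigr)^2$.

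Strong convexity then supplies $\|\nabla\phi(x_k)\|^2\ge 2\mu\bigl(\phi(x_k)-\phi(x^\star)\bigr)$, turning the previous inequality into
\begin{equation*}
\phi(x_{k+1})-\phi(x^\star)\le (1-\eta\varepsilon\mu)\bigl(\phi(x_k)-\phi(x^\star)\bigr)+\eta\varepsilon\mu N,
\end{equation*}
with $N=\tfrac{1}{2\mu}\bigl(\tfrac{(1+A)\Psi e_g}{\psi}\bigr)^2$. Subtracting $N$ from both sides yields exactly the claimed contraction $\phi(x_{k+1})-\phi(x^\star)-N\le (1-\eta\varepsilon\mu)\bigl(\phi(x_k)-\phi(x^\star)-N\bigr)$, valid precisely when $x_k\notin \mathcal{N}_1$, so that the right-hand side is nonnegative.

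The main technical obstacle is the bookkeeping: one must choose the Young parameters in the cross term and in the quadratic noise term using a single $A$, so that both the contraction factor simplifies to $1-\eta\varepsilon\mu$ and the residual neighborhood radius matches the stated $N$. Since the theorem is transcribed as a direct application of the framework in \cite{Irwin2023}, the argument follows their template verbatim, with the only substantive change being that the uniform eigenvalue bounds on $H_k$ are now supplied by Lemma~\ref{lemma:1} for soft~QN rather than by the SP-BFGS-specific analysis of \cite{Irwin2023}; no other structural modification is needed.
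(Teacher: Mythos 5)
The paper's own ``proof'' of this theorem is just the citation to Theorem~4 of \cite{Irwin2023}, which applies verbatim once Lemma~\ref{lemma:1} supplies the uniform bounds $\psi I \preceq H_k \preceq \Psi I$ for the soft QN iterates; your proposal correctly identifies exactly this and your descent-lemma/strong-convexity sketch is the standard template behind that cited result. The proposal is therefore correct and takes essentially the same approach as the paper.
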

\begin{proof}
	See \cite[Theorem 4]{Irwin2023}. 
\end{proof}

Besides a fixed step-size, we can also use line-search to generate an $\eta_k$ for iteration $k$. 
To account for the noise in the function evaluations, we use the relaxed Armijo condition introduced in~\cite{Irwin2023}
\begin{equation}\label{eq:Armijo}
	f(x_k + \eta_k p_k) \le f(x_k) + \eta_k c p_k^T g(x_k) + 2 \varepsilon_{\rm tol}.
\end{equation}
Here, $c>0$, $\varepsilon_{\rm tol}> 0 $, and the step-size $\eta_k$ is the maximum value in $\left\{\tau^j \eta_0 \mid j=0,1,2, \ldots\right\}$ that satisfies the relaxed Armijo condition \eqref{eq:Armijo} for some $\eta_0>0$  and $\tau\in (0,1) $. When we use such step-sizes $\eta_k$ in Algorithm \ref{alg} we get a soft QN Algorithm with line-search that has the following guarantees. 
\begin{theorem}(Line-search step-size $\eta_k$ satisfying \eqref{eq:Armijo})\label{thm:linesearch}
	Consider Algorithm \ref{alg}. Under Assumptions \ref{asm:bounded noise}--\ref{asm:strong-cvx}, let $\varepsilon_{\rm tol}>\epsilon_f$ and
	$$
	0<c \leq \frac{\varepsilon}{2 \Psi} \frac{\left(1-\frac{\psi}{(1+A) \Psi}\right)}{\left(1+\frac{\psi}{(1+A) \Psi}\right)}<\frac{\psi}{2} \frac{A((1+A) \Psi-\psi)}{((1+A) \Psi+\psi)^2},
	$$
	where $0<\varepsilon<\frac{A \psi \Psi}{((1+A) \Psi+\psi)}$ and $A>0$ is some constant.
	Then, for any $x_k \notin \mathcal{N}_1\left(\psi, \Psi, A \Psi{e}_g\right) \cup \mathcal{N}_2(\bar{\eta})$, where 
	\begin{equation*}
		\mathcal{N}_2(\bar{\eta}) =\left\{x \mid \phi(x) \leq \phi^{\star}+\bar{\eta}\right\}, ~
		\bar{\eta}=\frac{\Psi L}{ c \tau \left(1-\frac{\psi}{(1+A) \Psi}\right)^2 \varepsilon \mu}\left(\varepsilon_{\rm tol}+\epsilon_f\right),
	\end{equation*}
	we have 
	$$
	\phi(x_{k+1})-\left(\phi(x^{\star})+\bar{\eta} \right)\leq \left(1-\frac{2  c \tau \left(1-\frac{\psi}{(1+A) \Psi}\right)^2}{\Psi L} \varepsilon\mu\right) \left(\phi(x_k)-\left(\phi(x^{\star})+\bar{\eta}\right)\right).
	$$
\end{theorem}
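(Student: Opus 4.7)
The plan is to invoke the same argument used in \cite{Irwin2023} for general quasi-Newton methods whose Hessian inverse approximations satisfy $\psi I \preceq H_k \preceq \Psi I$, since our Lemma~\ref{lemma:1} already guarantees that soft QN meets this requirement whenever $\alpha_k$ is chosen according to~\eqref{eq:alpha}. The strategy has three layers: (i) verify that the relaxed Armijo condition is satisfiable and produces a step-size bounded away from zero, (ii) translate this into a per-iteration decrease in $\phi$ in terms of $\|\nabla \phi(x_k)\|$, and (iii) apply strong convexity to convert this into linear contraction of the optimality gap up to a noise floor.

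First I would analyze the backtracking test~\eqref{eq:Armijo}. Since $p_k = -H_k g_k$ with $\psi I \preceq H_k \preceq \Psi I$, the noisy direction satisfies $p_k^\top g(x_k) \leq -\psi \|g(x_k)\|^2$ and $\|p_k\| \leq \Psi \|g(x_k)\|$. Using $L$-smoothness of $\phi$ and the noise bounds $|n_f|\le e_f$, $\|n_g\|\le e_g$, a Taylor expansion of $f(x_k+\eta p_k)$ together with the bound $\varepsilon_{\rm tol}>e_f$ gives an interval of step-sizes for which the relaxed Armijo condition holds. This interval, combined with the backtracking rule $\eta_k\in\{\tau^j\eta_0\}$, yields a lower bound $\eta_k \geq \bar{\eta}_{\min}$ proportional to $\psi/(\Psi L)$, provided $c$ is chosen as in the hypothesis. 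The condition $c \leq \tfrac{\varepsilon}{2\Psi}\bigl(1-\tfrac{\psi}{(1+A)\Psi}\bigr)/\bigl(1+\tfrac{\psi}{(1+A)\Psi}\bigr)$ is exactly what is needed to absorb the error terms from noisy gradients into the ``effective'' descent coefficient $\varepsilon$.

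Next, from the accepted Armijo step I would derive a decrease of the form
\begin{equation*}
\phi(x_{k+1}) - \phi(x_k) \leq -\eta_k c \psi \|\nabla \phi(x_k)\|^2 + R_k,
\end{equation*}
where the remainder $R_k$ collects all contributions from $n_g$ (entering through $p_k^\top g_k$ versus $p_k^\top \nabla\phi(x_k)$) and $n_f$ (entering through the $2\varepsilon_{\rm tol}$ slack). These terms are controlled by $\Psi e_g$ and by $\varepsilon_{\rm tol}+e_f$, respectively, and produce precisely the two neighborhoods $\mathcal{N}_1$ and $\mathcal{N}_2$: $\mathcal{N}_1$ arises when the noisy gradient term $(1+A)\Psi e_g/\psi$ dominates, and $\mathcal{N}_2$ arises from the $\bar{\eta}$ offset caused by the line-search slack.

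Finally, for $x_k$ outside both neighborhoods I would apply $\mu$-strong convexity in the form $\|\nabla \phi(x_k)\|^2 \geq 2\mu(\phi(x_k)-\phi^{\star})$ to turn the gradient-norm decrease into a contraction of the shifted optimality gap $\phi(x_k)-\phi^{\star}-\bar{\eta}$. Plugging in the lower bound $\eta_k \geq \tau \bar{\eta}_{\min}$ from the backtracking analysis yields the stated contraction rate $1-\tfrac{2c\tau(1-\psi/((1+A)\Psi))^2}{\Psi L}\varepsilon\mu$. The main obstacle is the bookkeeping in step (i)--(ii): the constant $\varepsilon$ and the factor $(1-\psi/((1+A)\Psi))^2$ must be carefully tracked so that the residual noise terms from both $n_g$ and $n_f$ are exactly dominated outside $\mathcal{N}_1\cup\mathcal{N}_2$; since this is identical to the line-search analysis in \cite{Irwin2023}, I would simply invoke \cite[Theorem 5]{Irwin2023} once the uniform bound $\psi I \preceq H_k \preceq \Psi I$ is in place, as guaranteed by Lemma~\ref{lemma:1}.
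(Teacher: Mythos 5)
Your proposal is correct and matches the paper's approach: the paper's entire proof is a citation to \cite[Theorem 5]{Irwin2023}, whose only soft-QN-specific prerequisite is the uniform bound $\psi I \preceq H_k \preceq \Psi I$ supplied by Lemma~\ref{lemma:1}, exactly as you conclude. The additional sketch of the backtracking and descent analysis is a faithful outline of the cited argument rather than a different route.
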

\begin{proof}
	See \cite[Theorem 5]{Irwin2023}. 
\end{proof}

Theorem \ref{thm: rate} establishes that the soft QN iterates are guaranteed to decrease the objective value at a linear rate to a neighborhood around the optimal value. However, the presence of noise in the gradient estimates eventually impedes further progress. From the definition of ${\mathcal N}_{1}$, we see that the upper bound on the residual error is proportional to $e_g^2$.  Since $\varepsilon < \psi$, it holds that $\eta \varepsilon \mu \le \frac{\psi}{L\Psi^2} \psi \mu = \frac{\mu}{L}\frac{\psi^2}{\Psi^2}$ and we can infer a convergence factor of at least $1-O(\frac{\mu}{L}\frac{\psi^2}{\Psi^2} )$. Moreover, there exists a trade-off between the convergence rate and the proximity to the optimal value that can be influenced by adjusting the free parameter $A$. Specifically, reducing $A$ results in a smaller neighborhood, but it also reduces the admissible step-size $\eta$ and consequently yields slower convergence.
In particular, in the absence of gradient computation noise, i.e., when $e_g=0$, linear convergence to the optimal value can be attained.

The bounds presented in Theorem \ref{thm:linesearch} are more complicated to interpret, due to the influence of noise on both gradient and function evaluations. When $\mathcal{N}_2$ dominates (i.e., $x_k \in \mathcal{N}_2$ but $x_k \notin \mathcal{N}_1$), the iterates produced by the algorithm generate objective function values that converge linearly to a neighborhood surrounding the optimal value that is inescapable due to the presence of $\epsilon_f$. However, in scenarios where $\mathcal{N}_1$ dominates, the algorithm lacks theoretical guarantees to further converge towards $\mathcal{N}_2$. Additionally, the magnitude of the function value decreases ensured by the line-search impacts the convergence factors and the guaranteed precision. Decreasing the parameter $c$ and increasing $\varepsilon_{\rm tol}$ relaxes the Armijo condition more but results in a slower rate of convergence and a larger neighborhood.

\begin{remark}
	Unlike most existing stochastic QN methods that consider noise from random sampling~\cite{mokhtari2014res,wang2017stochastic,moritz2016linearly,yasuda2022addhessian,chen2023modified}, this paper addresses scenarios involving more general bounded noise. Due to the unclear characteristics of general noise, we assume only boundedness in Assumption \ref{asm:bounded noise}, resulting in convergence results, as shown in Theorems \ref{thm: rate} and \ref{thm:linesearch}, which are no better than first-order methods. 
	While some studies assume unbiased gradient estimates and employ variance reduction techniques to mitigate the impact of noise on convergence in the case of random sampling~\cite{moritz2016linearly,yasuda2022addhessian,chen2023modified}, these techniques are not applicable for general bounded noise. 
	Moreover, the use of stochastic gradients to simultaneously update a Hessian inverse approximation and define the search direction of the quasi-Newton algorithm  results in a situation where $H_k$ and $g_k$ are correlated, which poses significant challenges for analysis. 
	One way to ensure independence of $g_k$ and $H_k$ is to calculate additional gradients for generating $H_k$, instead of using the same gradients $g_k$ that are used to define the search directions and impose additional assumptions such as independence of gradient noises at different evaluations. 
    If $H_k$ and $g_k$ are uncorrelated and upper and lower bounds on $H_k$ are enforced, a soft QN algorithm can be shown to fit the framework for stochastic quasi-Newton methods of \cite{wang2017stochastic}. Hence, it can be shown to be convergent in a setting with a non-convex loss function and gradient noise bounded only in variance.
    However, using different gradient evaluations for updating $H_k$ and computing $g_k$ increases the number of gradient evaluations and limits the applicable noise scenarios. 
    As we will show in the numerical experiments, our soft QN matches the performance of stochastic gradient descent with random sampling noise (see Fig.~\ref{fig:logreg}) and significantly outperforms it with additive noise (see Fig.~\ref{fig:QP}).
\end{remark}

\section{Numerical results}

In this section, we assess the effectiveness of the proposed soft QN method on logistic regression problems, randomly generated quadratic problems, and problems from the CUTEst collection~\cite{CUTEst}. The type of noise is different in all these tests: In the case of logistic regression, the noise arises due to random sampling of data. For both the quadratic problems and the CUTEst problems, artificial random noise is generated and added to the gradient. For the CUTEst problems, noise is added to the objective function as well. For the quadratic problems, the noise is sampled from the standard normal distribution, while for the CUTEst problems, it is sampled uniformly from a hypersphere. For all the tests, we used a constant $\alpha_k$ for soft QN to keep it simple. For SP-BFGS, we used methods suggested in \cite{Irwin2023} to determine $\beta_k$. Different methods for step-size selection are also used in the three different tests. For all the quasi-Newton methods used, the initial  Hessian inverse approximation is an identity matrix. Since all the problems are stochastic, the algorithm does not have access to the true objective function and its gradient, but approximations of them. When we describe the stochastic problems below, we use the same notation as in (\ref{eq:noisy-loss-g}): $\phi(x)$ is the objective function that we want to optimize, $\nabla \phi(x)$ is its gradient, while $f(x)$ and $g(x)$ are the stochastic approximations of $\phi(x)$ and $\nabla \phi(x)$ available to the solver.

\subsection{Logistic regression}

In the first setting, we consider noise that originates from random sampling. To do so, we let the algorithms solve a logistic regression problem
\begin{equation*}
	\begin{aligned}
		&\underset{x\in \mathbb{R}^{n+1}}{\mbox{minimize}} ~ \phi(x) = \frac{1}{N} \sum_{i=1}^N \phi(x;\{z_i,y_i\}) + \rho \|x_{1:n}\|^2, \\
		&\text{with}~  \phi(x;\{z_i,y_i\}):=\log(1+\exp(-y_i(x_0 + x_{1:n}^T z_i))).
	\end{aligned}
\end{equation*}
Here, $x\in \mathbb{R}^{n+1}$ is the decision variable whose first component, $x_0\in \mathbb{R}$, is the model weight for the intercept and the remaining components, $x_{1:n}\in \mathbb{R}^n$, are feature weights. Further, $(z_i, y_i)$ are the feature-label pairs in the training data, where   
$z_i \in \mathbb{R}^n$ are features and  $y_i \in \{-1,1\}$ are the associated labels. The scalar $\rho>0$ is a regularization parameter. 

To compute gradient approximations, we randomly sample a  subset $\mathcal{S} \subset \{1,...,N\}$ with cardinality $|\mathcal{S}|$ and approximate the full gradient by the stochastic gradient $g(x)$ given by $g(x):=\frac{1}{|\mathcal{S}|} \sum_{\mathcal{S}_i\in \mathcal{S}} \nabla\phi(x;(y_{\mathcal{S}_i},z_{\mathcal{S}_i}))$.  
We use the data set ijcnn1  from libsvm \cite{chang2011libsvm}, where  $n=22$ and $N=49990$. The data was pre-processed using feature-wise 2-norm normalization.

We compare our soft QN with SP-BFGS, SGD, and BFGS with gradients computed by our stochastic approximation, which we refer to as stochastic BFGS. We set $|\mathcal{S}|=1000$ and $\rho=0.1$.  
For soft QN, we set the penalty parameter $\alpha_k=0.5$. For SP-BFGS, we set the penalty parameter  $\beta_k = 0.1 \|s_k\|_2 + 10^{-10}$. As suggested in \cite{Irwin2023}, such $\beta_k$ can adaptively relax the secant equation when $s_k^Ty_k$ is dominated by noise (corresponding to small $\|s_k\|_2$
) and tighten the secant equation when $s_k^Ty_k$ is dominated by curvature information (corresponding to large $\|s_k\|_2$). Both the constant value of $\alpha_k$ and the coefficient of $\|s_k\|_2^2$ in $\beta_k$ were determined by tuning experiments conducted for one particular realization of the random sampling.  For all the algorithms, we set the step-size $\eta_k=0.1$. 
We conduct a Monte Carlo simulation with 100 trials, in which we provide the algorithms with only sampling-based gradient approximations, while also recording the actual gradient norm. 
For each trial and each method, the iterate vector is initialized to be a vector of all zeros.

\begin{figure}[h]
	\centering
	\includegraphics[width = 1 \textwidth]{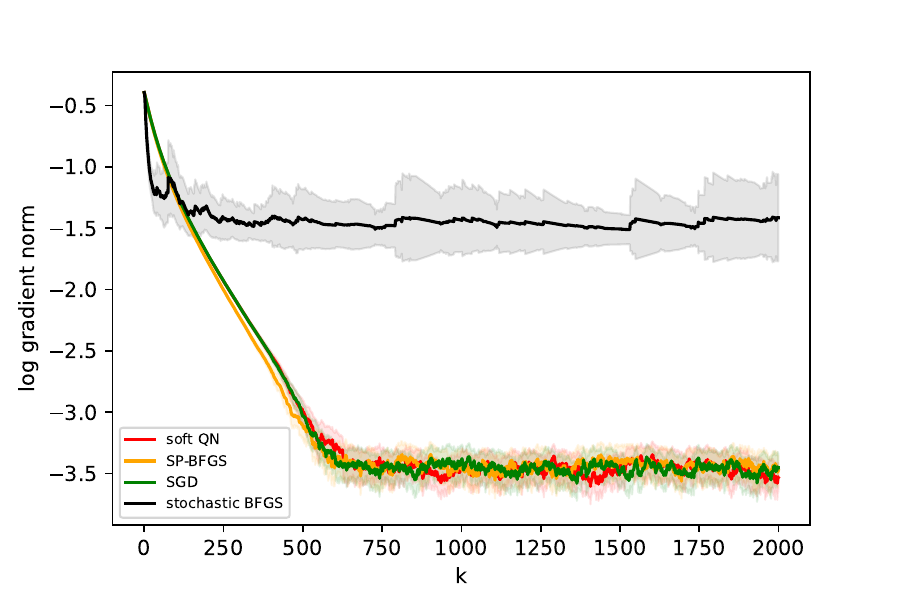}
	\caption{~Logistic regression experiments. The 10-logarithm of the gradient norm is plotted against iteration number. Solid lines indicate the mean over all instances in the Monte Carlo simulation, while shaded areas represent three standard deviation confidence intervals. }
	\label{fig:logreg}
\end{figure}
Fig.~\ref{fig:logreg} depicts the average 10-logarithm of the gradient norm for a certain iteration number, with shaded areas around the graphs representing values within three sample standard deviations of the estimator. It shows that all the other algorithms outperform stochastic BFGS. Although stochastic BFGS manages to decrease the gradient, the adaptions made to derive SP-BFGS and soft QN led to an improvement. We did not find a statistically significant difference between the performance of SP-BFGS, soft QN, and SGD in this test. This stands in contrast to the results of the subsequent experiments, which show a clear advantage for soft QN. We believe that this is due to the regularization and the pre-processing of the data, which  makes the logistic regression problem well-conditioned and limits the advantage of methods that take the curvature of the objective function into account with  Hessian inverse approximations. 

\subsection{Quadratic problems}
In the second setting, we test soft QN on problems with noise that has bounded variance, added in the gradient evaluations.  We generate quadratic problems on the form
\begin{equation}\label{eq:QP}
	\underset{x\in \mathbb{R}^n}{\mbox{minimize}} ~ \phi(x):= \frac{1}{2}x^THx + b^Tx.    
\end{equation}

%
The procedure for generating these problems is based 
on 
\cite{RandomQP}. We construct the matrix $H\in {\mathbb R}^{n\times n}$ by first defining its eigenvectors from the columns of the factor Q in the QR decomposition of a random matrix where each element follows a standard normal distribution. We then fix the minimum eigenvalue of $H$ to be 0.01 and the maximum eigenvalue to be 1, and draw the remaining $n-2$ eigenvalues uniformly from the interval $[0.01, 1]$. Finally, we choose the vector $b\in \mathbb{R}^n$ in such a way that the optimal solution to the problem \eqref{eq:QP} is ${\bf 1}$, representing a vector with all elements equal to 1. In our tests, we use $n=100$ variables. 

%
%
We consider a scenario where the algorithm has access to a gradient approximation $g(x)$ that is perturbed by noise $n_g \sim {\mathcal N}(0, I)$,
\begin{equation*}
	g(x)= \nabla \phi(x) + n_g.     
\end{equation*}
As shown in \cite{Polyak}, for unconstrained quadratic problems with additive normal noise on the gradients, Newton's method with the exact Hessian, using a step-size $\eta_k = 1/k$ where $k$ is the iteration number, is an asymptotically optimal algorithm.
Therefore, we use Newton's method as a baseline, utilizing a precise Hessian matrix but with noisy gradient $g(x)$. In addition, we compare the proposed soft QN with SGD, stochastic BFGS, and SP-BFGS.  Note that the choice of additive normal noise is made to ensure the optimality of Newton's method. While the noise components are theoretically unlimited in this case, they are limited in any practical implementation. Tests with bounded noise will be presented in the next section.

For soft QN, we set the penalty parameter $\alpha_k=10^{-4}$. For SP-BFGS, we set the penalty parameter $\beta_k=10^{-2}$ if $s_k^T y_k \geq0$ and $\beta_k=\frac{-0.9}{ s_k^T y_k}$ if $s_k^T y_k<0$. Here, we apply the SP-BFGS relaxation of the positive curvature condition~\cite[Equation~(51)]{Irwin2023} when $s_k^Ty_k<0$. The constant value of $\alpha_k$ and the value of $\beta_k$ used when $s_k^T y_k \geq 0$ were determined by tuning experiments. For stochastic BFGS, we implement the fail-safe measure that if $s_k^T y_k <0$ due to random noise, we do not update the  Hessian inverse approximation. For all algorithms, we use a diminishing step-size  $\eta_k=\frac{1}{k}$. We run a Monte Carlo simulation with 100 trials, with the initial variable vector set to ${\bf 0}$ in each trial. Note that from the way that the problems are generated, the squared norm of the initial true gradient is upper bounded by $n$, and the expected squared norm of the noise vector is $n$ throughout the iterations. Thus, noise has a significant impact on the gradient estimate even in the initial iteration. This explains why the value of $\alpha_k$ obtained in the initial tuning experiments was significantly lower than the one obtained for logistic regression: a small penalty parameter is needed in order to avoid overfitting to the noise. The results of our Monte Carlo simulations are shown in Fig.~\ref{fig:QP}. Each graph shows the average 10-logarithm of the normalized suboptimality, i.e. $\log_{10}((\phi(x_k)-\phi({\bf 1}))/(\phi({\bf 0})-\phi({\bf 1})))$, with shaded areas representing three standard deviation confidence intervals for the mean estimates.

Newton's method performs the best due to its use of accurate Hessians; stochastic BFGS actually increases the suboptimality by several orders of magnitude, as its use of noisy gradients fails to effectively capture curvature information. SGD converges slowly due to its reliance on only first-order information, and the problem is generated to have a high condition number. SP-BFGS and the proposed soft QN are faster than SGD since the second-order information accelerates convergence. As the confidence intervals do not overlap, we see that soft QN outperforms SP-BFGS with statistical significance.

\begin{figure}[H]
	\centering
	\includegraphics[width = 1\textwidth]{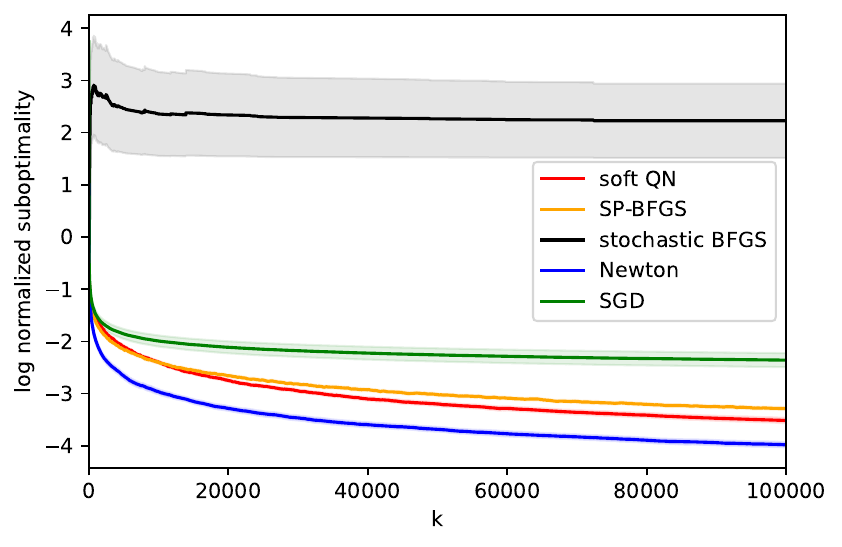}
	\caption{ ~Experiments on quadratic problems. The 10-logarithm of the normalized suboptimality is plotted against iteration number. Solid lines show the mean over all instances in the Monte Carlo simulation, shaded areas represent three standard deviation confidence intervals. }
	\label{fig:QP}
\end{figure}
\subsection{CUTEst problems}
In the third setting, we consider bounded noise in the function and gradient evaluations. Specifically, we let the solver have access to $f(x)$ and $g(x)$, where 
\begin{equation}
	f(x) := \phi(x) + n_f(x), ~g(x) := \nabla \phi(x) + n_g(x).   
\end{equation}
Here, $n_f$ is sampled randomly from the uniform distribution over the interval $[-e_f,e_f]$, and $n_g$ is sampled uniformly from the $n$-dimensional sphere with radius $e_g$. We compare the performance of soft QN and SP-BFGS for solving a set of problems from the CUTEst suite~\cite{CUTEst} The names and the dimensions of the problems are listed in Table~\ref{tab:problem_list} in Appendix~\ref{app:cutest}.  The updates of soft QN and SP-BFGS were combined with a backtracking line-search procedure similar to the one described in~\cite{Irwin2023}, summarized in Algorithm~\ref{alg:line_search}.  

\begin{algorithm}
	\caption{Line-search procedure}
	\label{alg:line_search}
	\begin{algorithmic}[1]
		\Procedure{line-search under noise~}{$p_k, x_k,g(x_k),  \varepsilon_{tol},c,\tau,T,f(\cdot)$}
		\State $\eta_k \gets 1$
		\State $t \gets 0$
		\While{$f(x_k + \eta_k p_k) > f(x_k) + \eta_k c p_k^T g(x_k) + 2 \varepsilon_{tol} \And t < T$ }
		\State $\eta_k \gets \tau \eta_k$
		\State $t \gets t + 1$
		\EndWhile
		\If{$f(x_k + \eta_k p_k) < f(x_k) + 2 \varepsilon_{tol}$}
		\State \Return $\eta_k$
		\Else
		\State \Return 0
		\EndIf
		\EndProcedure
	\end{algorithmic}
\end{algorithm}

To scale the noise according to the problem, we set $e_f = 10^{-4} |\phi(x_0)|$ and $e_g = 10^{-4} \|\nabla \phi(x_0) \|_2$, where $x_0$ is the initial point. We set $\tau = 0.5$, $c=10^{-4}$, $T=45$, and $\varepsilon_{tol} = e_f$. 
As for the values of $\alpha_k$ for soft QN and $\beta_k$ for SP-BFGS, we utilize different strategies. After some test runs on the problem ARWHEAD, we determined that $\alpha_k = 10^6$ for all the test problems. In contrast to the experiments with quadratic problems, the noise is low compared to the norm of the initial gradient, allowing a significantly larger $\alpha_k$ without underfitting curvature information. For $\beta_k$, we set $\beta_k = \frac{10^8}{\epsilon_g} \|s_k\|_2 + 10^{-10}$, which has been heuristically discovered by~\cite{Irwin2023} to work well in practice for various CUTEst problems. However, if this choice would result in a non-positive definite matrix, i.e., $s_k^T y_k \leq -\frac{1}{\beta_k}$, we instead refrain from updating the  Hessian inverse approximation for that iteration.  

For each test problem, we run the algorithms 30 times and track the measure
\begin{equation*}
	\Delta^i_j := \phi(x^i_j) - \phi^{\star},
\end{equation*}
where $x^i_j$ is the variable vector after $j$ function evaluations for test-run $i$, and $  \phi^{\star}$ is the optimal value that is available in the file defining the CUTEst problem. The initial point from which the iterations are started is also found in the same file.
Note that while we use the exact function values for the purpose of tracking suboptimality to evaluate performance, the algorithms in the test are only given access to noisy function values.
%
The line-search that we apply involves a tolerance term for the noise and does not guarantee descent in each step, so the suboptimality may actually increase from one instance to another. 
Nevertheless, we observe a clear downward trend in suboptimality in the test runs; see Figs~\ref{fig:DIXMAANA_soft QN} and~\ref{fig:DIXMAANA_SP-BFGS}, which show the performance for soft QN and SP-BFGS on the test problem DIXMAANA. 

In Figures~\ref{fig:DIXMAANA_soft QN} and~\ref{fig:DIXMAANA_SP-BFGS}, we show the median of all $\Delta^i_j, \ i \in \{1,\dots,30\}$ against the number of function evaluations $j$, and we show the range of all suboptimalities as well as the middle quartiles. The reason for using this way of visualizing the results, in contrast to previous plots, is the varying number of function evaluations in each iteration of the algorithm. For a specific number $j$ of function evaluations, the algorithm in test run $i$ might still be in the process of performing a line-search. The value of $x^i_j$ is then the last iterate before $j$ function evaluations has been performed, at which the algorithm had finished a line-search. Since the number of function evaluations performed before computing $x^i_j$ can vary considerably between test runs, one can expect the distribution of $\Delta^i_j$ to be skewed, making the median a better representation of the typical suboptimality than the mean in this case. Showing the quartiles as well as the full range of the distribution of $\Delta^i_j$ gives an idea of the skewness of the distribution.

For the test perfomed on the DIXMAANA problem, we can see that soft QN has a consistently better performance than SP-BFGS; Its median suboptimality decreases faster, the variability of its suboptimality is lower, and at the final iteration, the highest suboptimality for all of the test runs for soft QN is lower than the lowest suboptimality for SP-BFGS. 
\begin{figure}
	\centering
	\begin{subfigure}[b]{0.49\textwidth}
		\centering
		\includegraphics[width=7.4cm]{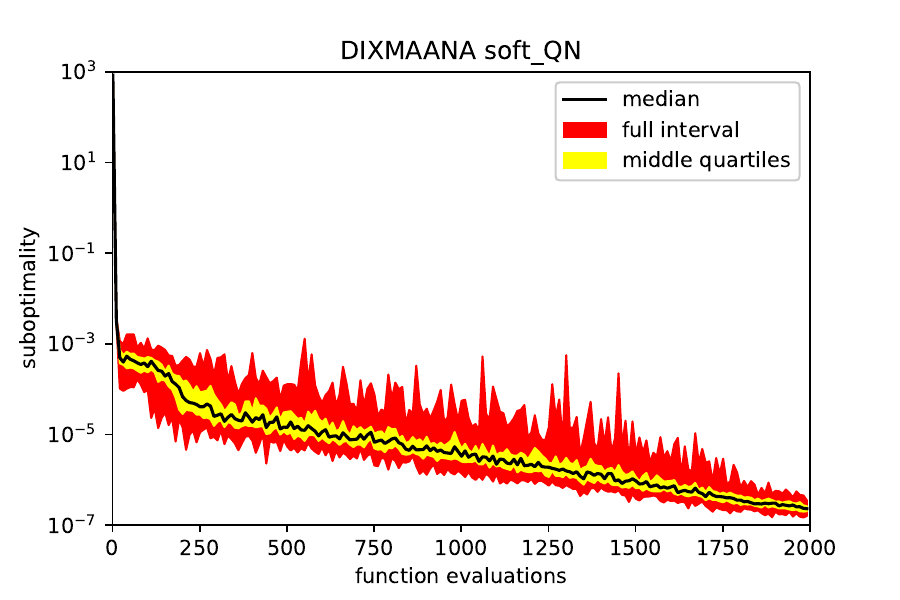}
		\caption{soft QN}
		\label{fig:DIXMAANA_soft QN}
	\end{subfigure}
	\hfill
	\begin{subfigure}[b]{0.49\textwidth}
		\centering
		\includegraphics[width=7.4cm]{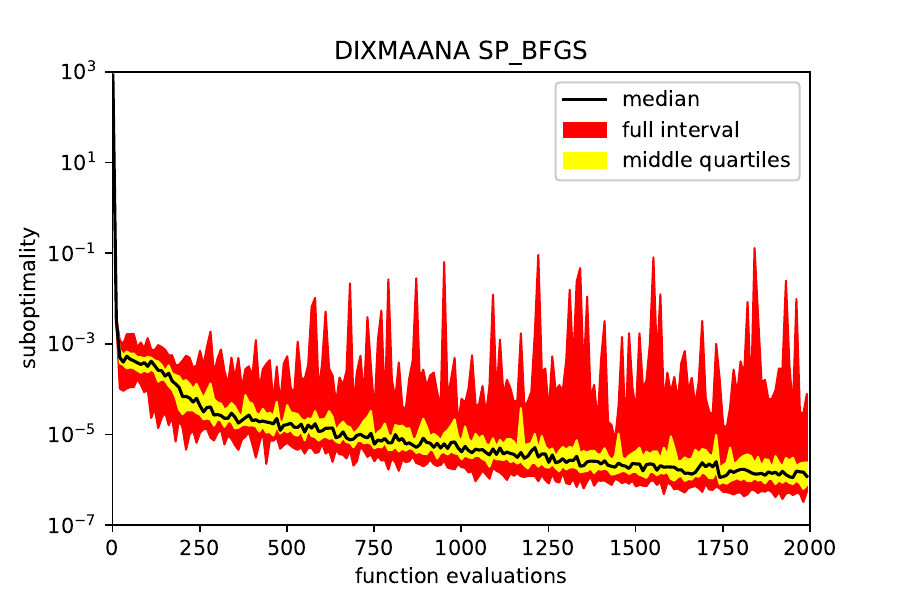}
		\caption{SP-BFGS}
		\label{fig:DIXMAANA_SP-BFGS}
	\end{subfigure}
	\caption{~Comparision of soft QN and SP-BFGS on the CUTEst-problem DIXMAANA. Interval of suboptimality for different test runs, plotted on a logarithmic scale against number of function evaluations.} 
	\label{fig-78}
\end{figure}

For all the tests with the CUTEst test problems, we terminate the algorithms as soon as 2000 function evaluations have been exceeded, interrupting any line-searches and determining whether the previous iterate or the current trial iterate in the line-search should be considered the final one, in the same way as if the usual maximum number of function evaluations in the line search $T$ has been exceeded. We then compute summary statistics for of $\Delta^i_{2000}$ for all the test runs. The results are given in Tables~\ref{tab: CUTEst soft QN} and~\ref{tab: CUTEst SP-BFGS} in Appendix \ref{app:cutest}.

\section{Conclusion}
We have proposed a novel quasi-Newton method to address the challenges of minimizing loss functions in the presence of general bounded noise. By replacing the traditionally hard secant condition with a penalty on its violation, we developed a recursive Hessian inverse approximation that is guaranteed to be positive definite for all penalty parameters, even for non-convex loss functions.  This soft QN update exhibits several attractive properties: it recovers BFGS under specific limits, treats positive and negative curvature equally, and is scale invariant. These features collectively enhance the efficacy of soft QN in noisy environments.  For strongly convex loss functions, we have detailed an algorithm that has linear convergence to a neighborhood of the optimal solution. Numerical experiments demonstrate the superiority of soft QN compared with existing algorithms, validating its practical effectiveness and highlighting its potential for various optimization tasks in noisy scenarios. 

Although this paper has provided a relatively complete treatment of the soft QN update, several problems warrant further attention. These include exploring ways to adapt the penalty parameter dynamically during iterations to help escape saddle points, as well as developing statistically efficient techniques in the presence of random function and gradient noise. Furthermore, designing a limited memory version of soft QN would be crucial for extending its applicability to large-scale settings. We hope to return to these issues in our future work.


\section*{Disclosure statement}

No potential conflict of interest was reported by the authors.

\section*{Funding}

This work was supported in part by Swedish Research Council (VR) under grant 2019-05319, by the Knut and Alice Wallenberg Foundation, and Digital Futures.

\section*{Notes on contributor(s)}

\noindent\emph{\textbf{Erik Berglund}} received the B.Sc. degree in Engineering Physics and the M.Sc. degree in Applied and Computational Mathematics from KTH Royal Institute of Technology, Stockholm, Sweden, in 2015 and 2017 respectively. He is currently a PhD student in the Division of Decision and Control Systems at KTH Royal Institute of Technology. His research is focused on the use of Hessians and Hessian approximations in optimization algorithms.  \\

\noindent\emph{\textbf{Jiaojiao Zhang}} 
received the B.E. degree in automation from School of Automation, Harbin Engineering University, Harbin, China, in 2015.  She received the master degree in control theory and control engineering from University of Science and Technology of China, Hefei, China, in 2018.  She received the Hong Kong PhD Fellowship Scheme (HKPFS) in August 2018. She received Ph.D. degree in the Department of Systems   Engineering and Engineering Management, Chinese University of Hong Kong, in 2022.  She is currently a postdoctoral researcher in the Division of Decision and Control Systems at KTH Royal Institute of Technology, Stockholm, Sweden. Her current research interests include distributed optimization and algorithm design.\\

\noindent\emph{\textbf{Mikael Johansson}} received the M.Sc. and Ph.D. degrees in electrical engineering from Lund University, Lund, Sweden, in 1994 and 1999, respectively. He held postdoctoral positions with Stanford University, Stanford, CA, USA, and the University of California at Berkeley, Berkeley, CA, USA, before joining the KTH Royal Institute of Technology, Stockholm, Sweden, in 2002, where he is currently a Full Professor. He has played a leading role in several national and international research projects on optimization, control, and communications. He has coauthored two books and over 200 papers, several of which are highly cited and have received recognition in terms of best paper awards. His research interests revolve around large-scale and distributed optimization, autonomous decision-making, control, and machine learning. Dr. Johansson has served on the Editorial Boards of Automatica and the IEEE Transactions on Control of Network Systems and on the program committee for several top conferences organized by IEEE and ACM.

\bibliographystyle{acm}
\bibliography{OMS-soft_QN_preprint}

\appendix
\section{}\label{app:cutest}

The problem names and dimensions of the considered test problems in the CUTEst suite are given in Table \ref{tab:problem_list}. 
Results for all the CUTEst test problems are summarized in Tables~\ref{tab: CUTEst soft QN} and \ref{tab: CUTEst SP-BFGS}. In these tables, we compare soft QN and SP-BFGS by various metrics: The minimum, maximum, mean, median, and sample variance of all $\Delta^i_{2000}$ for each problem. In each table, we have marked each cell with a lower value than the corresponding cell in the table for the other algorithm in green. From these tables, one can see that out of the 32 test problems, soft QN had a lower min $\Delta^i_{2000}$ on 20, a lower max $\Delta^i_{2000}$ on 20, a lower mean $\Delta^i_{2000}$ on 19, a lower median $\Delta^i_{2000}$ on 22 and a lower variance of $\Delta^i_{2000}$ on 20 of the problems. This indicates that soft QN has a competitive performance compared to SP-BFGS in this setting.

\begin{table}[ht]
	\centering
	\begin{tabular}{l|l|l|l|l|l}
		\hline
		Problem name & $n$ & Problem name & $n$ & Problem name & $n$ \\ \hline
		ARWHEAD & 100 & DIXMAANI & 90 & GENROSE & 100 \\ \hline
		BDQRTIC & 100 & DIXMAANJ & 90 & MOREBV & 100 \\ \hline
		CRAGGLVY & 100 & DIXMAANK & 90 & NONDIA & 100 \\ \hline
		DIXMAANA & 90 & DIXMAANL & 90 & NONDQUAR & 100 \\ \hline
		DIXMAANB & 90 & DIXMAANM & 90 & QUARTC & 100 \\ \hline
		DIXMAANC & 90 & DIXMAANN & 90 & SPARSQUR & 100 \\ \hline
		DIXMAAND & 90 & DIXMAANO & 90 & TQUARTIC & 100 \\ \hline
		DIXMAANE & 90 & DIXMAANP & 90 & TRIDIA & 100 \\ \hline
		DIXMAANF & 90 & EIGENALS & 110 & WATSON & 31 \\ \hline
		DIXMAANG & 90 & EIGENBLS & 110 & WOODS & 100 \\ \hline
		DIXMAANH & 90 & EIGENCLS & 30 & & \\ \hline
	\end{tabular}
	\caption{The name and dimension of test problems from the CUTEst suite.}
	\label{tab:problem_list}
\end{table}

\begin{table}[H]
	\begin{tabular}{l|lllll}
		Problem name  & Min $\Delta_{2000}^i $ & Max $\Delta_{2000}^i$ & Mean $\Delta_{2000}^i $ & Median $\Delta_{2000}^i$ & Var $\Delta_{2000}^i$\\ \hline
		ARWHEAD  & \cellcolor[HTML]{92D050}9.22E-07 & 1.52E-04                         & \cellcolor[HTML]{92D050}7.45E-06 & \cellcolor[HTML]{92D050} 2.17E-06                         & 7.19E-10                         \\
		BDQRTIC  & 5.76E-04                         & 2.07                             & 1.07E-01                         & \cellcolor[HTML]{92D050}1.36E-03 & 1.15E-01                         \\
		CRAGGLVY & \cellcolor[HTML]{92D050}2.67E-03 & 8.40E+01                         & 3.12                             & \cellcolor[HTML]{92D050}1.16E-02 & 2.27E+02                         \\
		DIXMAANA & \cellcolor[HTML]{92D050}1.47E-07 & \cellcolor[HTML]{92D050}2.84E-07 & \cellcolor[HTML]{92D050}2.23E-07 & \cellcolor[HTML]{92D050}2.28E-07 & \cellcolor[HTML]{92D050}1.62E-15 \\
		DIXMAANB & \cellcolor[HTML]{92D050}5.81E-07 & \cellcolor[HTML]{92D050}9.25E-06 & \cellcolor[HTML]{92D050}1.57E-06 & \cellcolor[HTML]{92D050}1.11E-06 & \cellcolor[HTML]{92D050}2.44E-12 \\
		DIXMAANC & 3.42E-06                         & \cellcolor[HTML]{92D050}2.05E-04 & \cellcolor[HTML]{92D050}1.83E-05 & \cellcolor[HTML]{92D050}6.35E-06 & \cellcolor[HTML]{92D050}1.40E-09 \\
		DIXMAAND & \cellcolor[HTML]{92D050}1.45E-05 & \cellcolor[HTML]{92D050}1.15E-02 & \cellcolor[HTML]{92D050}4.98E-04 & \cellcolor[HTML]{92D050}3.11E-05 & \cellcolor[HTML]{92D050}4.25E-06 \\
		DIXMAANE & \cellcolor[HTML]{92D050}3.37E-07 & \cellcolor[HTML]{92D050}8.90E-07 & \cellcolor[HTML]{92D050}5.48E-07 & \cellcolor[HTML]{92D050}5.36E-07 & \cellcolor[HTML]{92D050}2.05E-14 \\
		DIXMAANF & \cellcolor[HTML]{92D050}1.46E-06 & \cellcolor[HTML]{92D050}1.20E-05 & \cellcolor[HTML]{92D050}3.34E-06 & \cellcolor[HTML]{92D050}2.39E-06 & \cellcolor[HTML]{92D050}5.59E-12 \\
		DIXMAANG & \cellcolor[HTML]{92D050}2.25E-04 & \cellcolor[HTML]{92D050}1.34E-03 & \cellcolor[HTML]{92D050}8.15E-04 & \cellcolor[HTML]{92D050}8.30E-04 & \cellcolor[HTML]{92D050}8.37E-08 \\
		DIXMAANH & 3.91E-05                         & \cellcolor[HTML]{92D050}1.41E-03 & \cellcolor[HTML]{92D050}1.69E-04 & 7.50E-05                         & \cellcolor[HTML]{92D050}6.95E-08 \\
		DIXMAANI & \cellcolor[HTML]{92D050}1.37E-04 & \cellcolor[HTML]{92D050}2.54E-04 & \cellcolor[HTML]{92D050}1.95E-04 & \cellcolor[HTML]{92D050}1.97E-04 & \cellcolor[HTML]{92D050}9.75E-10 \\
		DIXMAANJ & 2.36E-03                         & 9.03E-03                         & 5.53E-03                         & 5.46E-03                         & 3.96E-06                         \\
		DIXMAANK & \cellcolor[HTML]{92D050}5.61E-04 & \cellcolor[HTML]{92D050}2.96E-03 & 1.73E-03                         & \cellcolor[HTML]{92D050}1.76E-03 & \cellcolor[HTML]{92D050}4.47E-07 \\
		DIXMAANL & \cellcolor[HTML]{92D050}2.03E-03 & 2.21E-02                         & 6.45E-03                         & 5.67E-03                         & 1.26E-05                         \\
		DIXMAANM & 1.42E-04                         & \cellcolor[HTML]{92D050}2.20E-04 & \cellcolor[HTML]{92D050}1.85E-04 & \cellcolor[HTML]{92D050}1.89E-04 & \cellcolor[HTML]{92D050}4.76E-10 \\
		DIXMAANN & \cellcolor[HTML]{92D050}1.84E-04 & \cellcolor[HTML]{92D050}2.65E-04 & \cellcolor[HTML]{92D050}2.20E-04 & \cellcolor[HTML]{92D050}2.19E-04 & \cellcolor[HTML]{92D050}3.07E-10 \\
		DIXMAANO & 1.64E-04                         & \cellcolor[HTML]{92D050}2.16E-04 & \cellcolor[HTML]{92D050}1.87E-04 & \cellcolor[HTML]{92D050}1.87E-04 & \cellcolor[HTML]{92D050}1.71E-10 \\
		DIXMAANP & 2.44E-04                         & 2.40E-03                         & 5.38E-04                         & \cellcolor[HTML]{92D050}4.81E-04 & 1.32E-07                         \\
		EIGENALS & 3.69E-06                         & \cellcolor[HTML]{92D050}5.88E-05 & \cellcolor[HTML]{92D050}2.25E-05 & 1.93E-05                         & \cellcolor[HTML]{92D050}2.77E-10 \\
		EIGENBLS & \cellcolor[HTML]{92D050}2.81E-08 & 8.24E-07                         & 1.58E-07                         & \cellcolor[HTML]{92D050}9.19E-08 & 3.02E-14                         \\
		EIGENCLS & \cellcolor[HTML]{92D050}1.47E-09 & \cellcolor[HTML]{92D050}5.62E-09 & \cellcolor[HTML]{92D050}3.83E-09 & \cellcolor[HTML]{92D050}3.93E-09 & \cellcolor[HTML]{92D050}9.65E-19 \\
		GENROSE  & \cellcolor[HTML]{92D050}6.80E-09 & \cellcolor[HTML]{92D050}2.71E-08 & \cellcolor[HTML]{92D050}1.61E-08 & \cellcolor[HTML]{92D050}1.52E-08 & \cellcolor[HTML]{92D050}2.58E-17 \\
		MOREBV   & 8.55E-07                         & 8.56E-07                         & 8.55E-07                         & 8.55E-07                         & \cellcolor[HTML]{92D050} 1.08E-20                         \\
		NONDIA   & \cellcolor[HTML]{92D050}1.78E-04 & 2.57E-02                         & 1.28E-03                         & \cellcolor[HTML]{92D050}3.85E-04 & 2.06E-05                         \\
		NONDQUAR & 4.42E-04                         & \cellcolor[HTML]{92D050}6.83E-04 & \cellcolor[HTML]{92D050}5.76E-04 & 5.78E-04                         & \cellcolor[HTML]{92D050}4.02E-09 \\
		QUARTC   & \cellcolor[HTML]{92D050}1.27E+02 & 2.87E+05                         & 1.64E+04                         & \cellcolor[HTML]{92D050}2.16E+02 & 3.12E+09                         \\
		SPARSQUR & \cellcolor[HTML]{92D050}5.00E-05 & 3.05E-02                         & 1.59E-03                         & 1.10E-03                         & 3.18E-05                         \\
		TQUARTIC & 1.74E-10                         & \cellcolor[HTML]{92D050}1.98E-09 & 1.06E-09                         & 1.06E-09                         & 2.11E-19                         \\
		TRIDIA   & \cellcolor[HTML]{92D050}2.53E-07 & \cellcolor[HTML]{92D050}1.49E-06 & \cellcolor[HTML]{92D050}4.66E-07 & \cellcolor[HTML]{92D050}4.60E-07 & \cellcolor[HTML]{92D050}5.46E-14 \\
		WATSON   & \cellcolor[HTML]{92D050}2.39E-05 & \cellcolor[HTML]{92D050}3.17E-04 & \cellcolor[HTML]{92D050}1.29E-04 & 1.16E-04                         & \cellcolor[HTML]{92D050}5.51E-09 \\
		WOODS    & 4.23E-02                         & 6.30E-01                         & 1.17E-01                         & 1.00E-01                         & 1.02E-02                       
	\end{tabular}
	\caption{ Results from the test runs with soft QN on a set of CUTEst problems. Instances where soft QN performed better than SP-BFGS are marked in green.}
 \label{tab: CUTEst soft QN}
\end{table}

\begin{table}[H]
	\begin{tabular}{l|lllll}
		Problem name  & Min $\Delta_{2000}^i $ & Max $\Delta_{2000}^i$ & Mean $\Delta_{2000}^i $ & Median $\Delta_{2000}^i$ & Var$\Delta_{2000}^i$\\ \hline
		ARWHEAD  & 2.05E-06                         & \cellcolor[HTML]{92D050}5.59E-05 & 9.43E-06                         & 5.11E-06 & \cellcolor[HTML]{92D050}1.24E-10  \\
		BDQRTIC  & \cellcolor[HTML]{92D050}5.53E-04 & \cellcolor[HTML]{92D050}5.86E-02 & \cellcolor[HTML]{92D050}4.61E-03 & 1.48E-03                          & \cellcolor[HTML]{92D050}1.11E-04  \\
		CRAGGLVY & 3.35E-03                         & 8.40E+01                         & 3.12                             & 1.29E-02                          & 2.27E+02                          \\
		DIXMAANA & 3.86E-07                         & 3.81E-04                         & 1.47E-05                         & 1.20E-06                          & 4.65E-09                          \\
		DIXMAANB & 9.57E-07                         & 3.12E-05                         & 4.54E-06                         & 2.50E-06                          & 3.11E-11                          \\
		DIXMAANC & \cellcolor[HTML]{92D050}2.97E-06 & 6.92E-04                         & 4.27E-05                         & 9.50E-06                          & 1.58E-08                          \\
		DIXMAAND & 1.55E-05                         & 3.00E-02                         & 1.13E-03                         & 3.42E-05                          & 2.89E-05                          \\
		DIXMAANE & 7.61E-07                         & 5.26E-05                         & 4.61E-06                         & 1.96E-06                          & 9.02E-11                          \\
		DIXMAANF & 1.92E-06                         & 9.11E-05                         & 1.06E-05                         & 4.84E-06                          & 3.06E-10                          \\
		DIXMAANG & 2.90E-04                         & 1.11E-01                         & 6.03E-03                         & 1.09E-03                          & 4.11E-04                          \\
		DIXMAANH & \cellcolor[HTML]{92D050}3.60E-05 & 1.19E-01                         & 4.39E-03                         & \cellcolor[HTML]{92D050}7.41E-05  & 4.55E-04                          \\
		DIXMAANI & 1.38E-04                         & 2.79E-04                         & 2.05E-04                         & 2.08E-04                          & 1.38E-09                          \\
		DIXMAANJ & \cellcolor[HTML]{92D050}2.35E-03 & \cellcolor[HTML]{92D050}8.98E-03 & \cellcolor[HTML]{92D050}5.27E-03 & \cellcolor[HTML]{92D050}4.88E-03  & \cellcolor[HTML]{92D050}3.78E-06  \\
		DIXMAANK & 6.27E-04                         & 3.14E-03                         & 1.73E-03                         & 1.80E-03                          & 5.10E-07                          \\
		DIXMAANL & 2.14E-03                         & \cellcolor[HTML]{92D050}1.06E-02 & \cellcolor[HTML]{92D050}5.72E-03 & \cellcolor[HTML]{92D050}5.57E-03  & \cellcolor[HTML]{92D050}2.67E-06  \\
		DIXMAANM & \cellcolor[HTML]{92D050}1.23E-04 & 2.36E-04                         & 1.90E-04                         & 1.93E-04                          & 7.49E-10                          \\
		DIXMAANN & 1.88E-04                         & 6.02E-03                         & 5.11E-04                         & 2.23E-04                          & 1.28E-06                          \\
		DIXMAANO & \cellcolor[HTML]{92D050}1.58E-04 & 2.67E-04                         & 1.97E-04                         & 1.94E-04                          & 3.85E-10                          \\
		DIXMAANP & \cellcolor[HTML]{92D050}2.29E-04 & \cellcolor[HTML]{92D050}6.82E-04 & \cellcolor[HTML]{92D050}4.74E-04 & 4.92E-04                          & \cellcolor[HTML]{92D050}9.86E-09  \\
		EIGENALS & \cellcolor[HTML]{92D050}2.33E-06 & 7.63E-05                         & 2.46E-05                         & \cellcolor[HTML]{92D050}1.86E-05  & 3.46E-10                          \\
		EIGENBLS & 4.65E-08                         & \cellcolor[HTML]{92D050}2.77E-07 & \cellcolor[HTML]{92D050}1.31E-07 & 1.05E-07                          & \cellcolor[HTML]{92D050}3.18E-15  \\
		EIGENCLS & 4.22E-09                         & 8.63E-08                         & 1.83E-08                         & 1.48E-08                          & 2.25E-16                          \\
		GENROSE  & 8.39E-09                         & 1.15E-07                         & 5.42E-08                         & 5.30E-08                          & 5.19E-16                          \\
		MOREBV   & \cellcolor[HTML]{92D050}4.78E-12 & \cellcolor[HTML]{92D050}3.86E-09 & \cellcolor[HTML]{92D050}2.88E-10 & \cellcolor[HTML]{92D050}9.31E-11  & 4.70E-19 \\
		NONDIA   & 2.05E-04                         & \cellcolor[HTML]{92D050}6.26E-03 & \cellcolor[HTML]{92D050}6.99E-04 & 4.38E-04                          & \cellcolor[HTML]{92D050}1.12E-06  \\
		NONDQUAR & \cellcolor[HTML]{92D050}4.40E-04 & 2.37E-03                         & 6.40E-04                         & \cellcolor[HTML]{92D050}5.68E-04  & 1.15E-07                          \\
		QUARTC   & 1.31E+02                         & \cellcolor[HTML]{92D050}4.96E+04 & \cellcolor[HTML]{92D050}2.39E+03 & 2.43E+02                          & \cellcolor[HTML]{92D050}8.24E+07  \\
		SPARSQUR & 5.76E-05                         & \cellcolor[HTML]{92D050}2.05E-03 & \cellcolor[HTML]{92D050}2.02E-04 & \cellcolor[HTML]{92D050}9.80E-05  & \cellcolor[HTML]{92D050}1.28E-07  \\
		TQUARTIC & \cellcolor[HTML]{92D050}1.06E-10 & 1.99E-09                         & \cellcolor[HTML]{92D050}7.38E-10 & \cellcolor[HTML]{92D050}7.00E-10  & \cellcolor[HTML]{92D050}1.42E-19  \\
		TRIDIA   & 1.23E-06                         & 6.28E-06                         & 3.05E-06                         & 2.76E-06                          & 1.55E-12                          \\
		WATSON   & 2.42E-05                         & 3.46E-04                         & 1.32E-04                         & \cellcolor[HTML]{92D050}1.12E-04  & 7.26E-09                          \\
		WOODS    & \cellcolor[HTML]{92D050}4.08E-02 & \cellcolor[HTML]{92D050}1.58E-01 & \cellcolor[HTML]{92D050}9.19E-02 & \cellcolor[HTML]{92D050}8.98E-02  & \cellcolor[HTML]{92D050}8.00E-04     
	\end{tabular}
	\caption{ Results from the test runs with SP-BFGS on a set of CUTEst problems. Instances where SP-BFGS performed better than soft QN are marked in green.}
 \label{tab: CUTEst SP-BFGS}
\end{table}

\end{document}